\definecolor{black}{rgb}{0.0, 0.0, 0.0}
\definecolor{red}{rgb}{1.0, 0.5, 0.5}
\newcommand{\margnote}[1]{
\ifthenelse{\boolean{shownotes}}%
{\marginpar{\raggedright\tiny\texttt{#1}}}%
{}%
}
\newcommand{\hole}[1]{
\ifthenelse{\boolean{shownotes}}%
{\begin{center} \fbox{ \rule {.25cm}{0cm} \rule[-.1cm]{0cm}{.4cm}
\parbox{.85\textwidth}{\begin{center} \texttt{#1}\end{center}} \rule
{.25cm}{0cm}}\end{center}} {} }
\def\d{\,\mathrm{d}}
\def\tot#1#2{\frac{\d #1}{\d #2}}
\def\eps{\varepsilon}
\title[]{Cucker-Smale model with normalized communication weights and time delay}
\author[Choi]{Young-Pil Choi}
\address[Young-Pil Choi]{\newline Fakult\"at f\"ur Mathematik
    \newline  Technische Universit\"at M\"unchen, Boltzmannstra{\ss}e 3, 85748, Garching bei M\"unchen, Germany}
\email{ychoi@ma.tum.de}
\author[Haskovec]{Jan Haskovec}
\address[Jan Haskovec]{\newline Computer, Electrical and Mathematical Sciences \& Engineering
    \newline King Abdullah University of Science and Technology, 23955 Thuwal, KSA}
\email{jan.haskovec@kaust.edu.sa}
\numberwithin{equation}{section}
\newtheorem{theorem}{Theorem}[section]
\newtheorem{lemma}{Lemma}[section]
\newtheorem{remark}{Remark}[section]
\newtheorem{definition}{Definition}[section]
\newtheorem{assumption}{Assumption}
\def\({\begin{eqnarray}}
\def\){\end{eqnarray}}
\def\[{\begin{eqnarray*}}
\def\]{\end{eqnarray*}}
\newcommand{\R}{\mathbb R}
\newcommand{\mc}{\mathcal C}
\newcommand{\bq}{\begin{equation}}
\newcommand{\eq}{\end{equation}}
\newcommand{\lt}{\left}
\newcommand{\rt}{\right}
\newcommand{\pa}{\partial}
\newcommand{\mt}{\mathcal{T}}
\newcommand{\ms}{\mathcal{S}}
\def\Lyap{\mathcal{L}}
\def\P{\mathcal{P}}
\def\N{\mathbb{N}}
\def\d{\mathrm{d}}
\def\bx{\mathbf{x}}
\def\bv{\mathbf{v}}
\begin{document}
\allowdisplaybreaks

\date{\today}

\subjclass[]{}
\keywords{}


\begin{abstract} We study a Cucker-Smale-type system with time delay in which agents interact with each other through normalized communication weights.
We construct a Lyapunov functional for the system and provide sufficient conditions for asymptotic flocking, i.e., convergence to a common velocity vector.
We also carry out a rigorous limit passage to the mean-field limit of the particle system as the number of particles tends to infinity.
For the resulting Vlasov-type equation we prove the existence, stability and large-time behavior of measure-valued solutions.
This is, to our best knowledge, the first such result for a Vlasov-type equation with time delay.
We also present numerical simulations of the discrete system with few particles that provide
further insights into the flocking and oscillatory behaviors of the particle velocities depending on the size of the time delay.
\end{abstract}

\maketitle \centerline{\date}

\tableofcontents

%
%
%
%
\section{Introduction}
Collective coordinated motion of autonomous self-propelled
agents with self-organization into robust patterns
appears in many applications ranging from animal herding to the emergence 
of common languages in primitive societies~\cite{Sumpter}.
Apart from its biological and evolutionary relevance, collective phenomena
play a prominent role in many other scientific disciplines, such
as robotics, control theory, economics and social 
sciences~\cite{Carrillo-review, Vicsek-survey, Pareschi-Toscani-survey}.

The Cucker-Smale model was introduced and studied in the seminal 
papers~\cite{CS1, CS2}, originally as a model for language evolution. 
Later the interpretation as a model for flocking in animals 
(birds) prevailed. The model considers a finite number $N\in\N$ of autonomous agents
located in the physical space $\R^d$, $d\geq 1$.
The agents are described by their phase-space 
coordinates $(x_i(t), v_i(t))\in\R^{2d}$, $i=1,2,\dots,N$,
where $x_i(t)$ denotes the position and $v_i(t)$ the velocity of the $i$-th agent.
The agents are subject to the following collective dynamics,
\begin{align}
\begin{aligned} \label{CS}
   \tot{x_i(t)}{t} &= v_i(t),\quad i=1,\cdots, N, \qquad t > 0, \\
   \tot{v_i(t)}{t} &= \sum_{k=1}^N \varphi(|x_k-x_i|) (v_k-v_i).
\end{aligned}
\end{align}
The communication rate $\varphi$ introduced in~\cite{CS1, CS2} and considered in most of 
the subsequent papers is of the form
\( \label{def_varphi}
   \varphi(s) = \frac{\lambda}{(1+s^2)^\beta},
\)
with the constants $\lambda>0$ and $\beta\in\R$.
We introduce the spatial and, resp., velocity diameters as follows,
\(  \label{dXdV}
   d_X(t) := \max_{1 \leq i,j \leq N}|x_i(t) - x_j(t)| \quad \mbox{and} \quad d_V(t) := \max_{1 \leq i,j \leq N}|v_i(t) - v_j(t)|.
\)
In general, the term \emph{flocking} refers to the phenomenon where autonomous agents reach 
a consensus based on limited environmental information and simple rules.
We stick to the commonly accepted mathematical definition, introduced by
Cucker and Smale:

\begin{definition}[Asymptotic flocking]\label{def:flocking}
We say that the system with particle positions $x_i(t)$ and velocities $v_i(t)$,
$i=1,\dots,N$ and $t\geq 0$, exhibits \emph{asymptotic flocking}
if the spatial and velocity diameters satisfy
\(  \label{flocking}
   \sup_{t\geq 0} d_X(t) < \infty,\qquad \lim_{t\to\infty} d_V(t) = 0.
\)
\end{definition}
 
The Cucker-Smale model \eqref{CS}--\eqref{def_varphi} is a simple relaxation-type model that reveals 
a phase transition depending on the intensity of communication between agents. 
If $\beta \leq 1/2$, then the model exhibits the so-called 
\emph{unconditional flocking}, where \eqref{flocking} holds for every initial configuration.
On the other hand, with $\beta> 1/2$ the flocking is \emph{conditional},
i.e., the asymptotic behaviour of the system depends on the value of $\lambda$
and on the initial configuration. 
This result was first proved in \cite{CS1, CS2} using tools from graph theory
(spectral properties of graph Laplacian), and slightly later reproved 
in~\cite{Tadmor-Ha} by means of elementary calculus. Another proof has been 
provided in~\cite{Ha-Liu}, based on a bound
by a system of dissipative differential inequalities, and, finally, the 
proof of~\cite{CFRT} is based on bounding the maximal velocity.
A rigorous derivation of the mean-field limit of the Cucker-Smale model was carried out in \cite{Ha-Liu}.

Various modifications of the classical Cucker-Smale model have 
been considered. For instance, the case of singular communication rates 
$\varphi(s) = 1 / s^\beta$ was studied in~\cite{CCH, CCMP, Ha-Liu, Peszek}.
Motsch and Tadmor~\cite{Motsch-Tadmor} scaled the communication rates
in terms of the relative distance between the agents, so that their 
model does not involve any explicit dependence on the number of agents.
The dependence of the communication rate on the topological rather 
than metric distance between agents was introduced in \cite{Haskovec}.
In \cite{CCHS}, systems of particles interacting through cut-off communication weights, for instance, a vision cone, were considered.
The influence of additive noise in individual velocity measurements 
was studied in~\cite{Ha-Lee-Levy} and~\cite{TLY},
while stochastic flocking dynamics with multiplicative white noises were 
considered in~\cite{Ahn-Ha}. The kinetic Cucker-Smale and Mostch-Tadmor equations with noise were studied in \cite{Choi, DFT},
showing the existence of a unique  global classical solution near Maxwellians and the convergence to them.

We are only aware of two papers where delays in information processing were
considered: In~\cite{Liu-Wu} a sufficient flocking condition is derived
for the Motsch-Tadmor variant of the model with processing delay.
In~\cite{EHS} the Cucker-Smale model with noise and delay
is studied and a sufficient flocking condition is derived in terms of noise intensity and delay length.
Let us note that, to our best knowledge, no analytic results exist in the literature
for a Vlasov-type equation with delay derived as the mean field limit of a delayed Cucker-Smale system.
 
We refer to \cite{CCP, CHL} and references therein for recent surveys on the Cucker-Smale type flocking models and its variants.
For more general collective behavior models, including first- and second-order systems,
we refer to, e.g., \cite{CCH2, CCHS, CCR, Jabin} and references therein.

In this paper we study a Cucker-Smale-type flocking system
with a fixed communication time-delay $\tau > 0$.
In particular, we assume that the agents are subject to the following collective dynamics,
\begin{align}\label{main_eq}
\begin{aligned}
\tot{x_i(t)}{t} &= v_i(t),\qquad i=1,\cdots,N, \quad t >0,\\
\tot{v_i(t)}{t} & = \sum_{k=1}^N \phi_{ik}(x,\tau) (v_k(t - \tau) - v_i(t)),
\end{aligned}
\end{align}
where $\phi_{ik}$ are the normalized communication weights given by
\(\label{phi}
\phi_{ik}(x,\tau) = \left\{ \begin{array}{ll}
\displaystyle \frac{\psi(|x_k(t - \tau) - x_i(t)|)}{\sum_{k\neq i} \psi(|x_k(t - \tau) - x_i(t)|)} & \textrm{if $k \neq i$,}\\[4mm]
 0 & \textrm{if $k=i$,}
  \end{array} \right.
\)
and $\psi: [0,\infty) \to (0,\infty)$ is the \emph{influence function}.
We consider the system subject to the initial datum
\(\label{IC0}
   x_i(s) = x^0_i(s), \quad v_i(s) = v^0_i(s),\qquad i=1,\cdots,N, \quad s \in [-\tau,0],
\)
i.e., we prescribe the initial position and velocity trajectories $x^0_i, v^0_i\in\mathcal C([-\tau,0]; \R^d)$.
For physical reasons it may be required that
$$  x_i^0(s) = x^0_i(-\tau) + \int_{-\tau}^s v_i^0(\sigma) \d\sigma\quad\mbox{for } s \in (-\tau,0],$$
but we do not pose this particular restriction here.

\begin{assumption}\label{ass:psi}
Throughout this paper we assume that the influence function $\psi$ is bounded, positive,
nonincreasing and Lipschitz continuous on $[0,\infty)$, with $\psi(0) = 1$.
\end{assumption}

In the first part of the paper we shall derive a sufficient condition
for asymptotic flocking for the system \eqref{main_eq}--\eqref{IC0}. 
The condition relates the decay properties of the influence function $\psi$
with the delay length $\tau$ and the spatial and velocity diameters of the initial datum.
Our strategy is first to show a uniform bound on the velocity diameter of the solution.
Then, applying a growth estimate on the convex hull of a set of particles velocities,
we show that the solution of the system \eqref{main_eq} is dominated by
a time-delayed system of dissipative differential inequalities analogous to the one proposed in \cite{Ha-Liu}.
This finally leads to the sought-for asymptotic flocking estimate.
We note that our flocking estimate refines the results of the previous works \cite{Liu-Wu, Motsch-Tadmor}.

The second part of the paper is devoted to the study of the mean-field limit of the particle system \eqref{main_eq}--\eqref{IC0}.
Letting formally $N\to\infty$ leads to the Vlasov-type kinetic equation for the one-particle distribution $f_t=f_t(x,v)$, which is
a time-dependent probability measure on the phase space $\R^d\times\R^d \equiv \R^{2d}$,
\bq\label{kin_mt}
\begin{array}{ll}
\pa_t f_t + v \cdot \nabla_x f_t + \nabla_v \cdot (F[f_{t-\tau}]f_t) = 0, \quad (x,v) \in \R^d \times \R^d, \quad t > 0,\\[4mm]
F[f_{t-\tau}](x,v) := \displaystyle \frac{\int_{\R^{2d}} \psi(|x-y|)(w-v)f(y,w,t-\tau)\,\d y\d w}{\int_{\R^{2d}} \psi(|x-y|)f(y,w,t-\tau)\,\d y\d w},\\[4mm]
f_s(x,v) = g_{s}(x,v), \quad (x,v) \in \R^d \times \R^d, \quad s \in [-\tau,0].
\end{array}
\eq
The initial datum $g_s$ is a time-dependent probability measure on the phase space $\R^{2d}$.
We shall prove the global existence and uniqueness of measure-valued solutions of \eqref{kin_mt}.
Moreover, we shall provide a stability estimate in terms of the Monge-Kantorowich-Rubinstein distance,
and, as a direct consequence, a bound on the error between the solutions of the kinetic equation \eqref{kin_mt}
and the empirical measure associated to the particle system \eqref{main_eq}--\eqref{IC0}.
Moreover, noting that the flocking estimate for the particle system does not depend on the number of particles enables us to
prove an asymptotic flocking result for the kinetic system.

The rest of this paper is organized as follows. In Section \ref{sec_par} we present our main result
on the flocking behavior of the discrete system \eqref{main_eq}--\eqref{IC0} and its proof.
Section \ref{sec_kin} is devoted to the rigorous derivation of the Vlasov-type equation \eqref{kin_mt}
from the discrete particle system \eqref{main_eq}--\eqref{IC0} in the mean-field limit as the number of particles $N$ goes to infinity.
Finally, in Section \ref{sec_nur} we provide results of numerical simulations of the discrete system with $N=2, 3, 4$ particles
and illustrate how the time-evolutions of particle velocities depend on the size of the time delay.

%
\section{Asymptotic behavior of the discrete particle model}\label{sec_par}

\begin{remark}[Existence of local solutions]\label{rem:existence}
By Assumption \ref{ass:psi}, the right-hand side of \eqref{main_eq} is locally Lipschitz continous
as a function of $(x_i(t), v_i(t))$, and thus, due to the Cauchy-Lipschitz theorem,
the ODE system \eqref{main_eq}--\eqref{IC0} admits a unique local-in-time $\mc^1$-solution.
We will denote it $\bx = (x_1,\dots,x_N)$ and $\bv = (v_1,\dots,v_N)$ in the sequel.
\end{remark}

The main result of this section is the following Theorem establishing
the global existence of solutions of the system \eqref{main_eq}--\eqref{IC0}
and providing a sufficient condition for asymptotic flocking as per Definition \ref{def:flocking}.

\begin{theorem}\label{thm_main}
Suppose that the initial velocity profiles $v_i^0$ are continuous and bounded on the time interval $[-\tau,0]$
and denote
\(   \label{Rv}
    R_v := \max_{s \in [-\tau,0]} \max_{1 \leq i \leq N} |v^0_i(s)|.
\)
Moreover, assume that 
\(  \label{ass1}
  d_V(0)  + \int_{-\tau}^0 d_V(s)\,ds < \int_{d_X(-\tau) + R_v \tau}^\infty \psi(s)\,ds,
\)
where $d_X$ and, resp., $d_V$ denote the spatial and, resp., velocity diameters defined in \eqref{dXdV}.
Then the solution $(\bx, \bv)$ of the system \eqref{main_eq}--\eqref{IC0} is global in time
and satisfies
\[
   d_V(t) \leq \left( \max_{s\in[-\tau,0]} d_V(s) \right) e^{-Ct} \quad \mbox{for } t \geq 0,
   \qquad \sup_{t\geq 0} d_X(t) < \infty,
\]
where $C$ is a positive constant independent of $t$ and $N$.
\end{theorem}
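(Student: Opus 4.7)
The plan is to proceed in four stages: a uniform a priori velocity bound, a delayed dissipative differential inequality for $d_V$, a Lyapunov functional that encodes the integral condition \eqref{ass1}, and a Halanay-type argument producing exponential decay.

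Since the normalization $\sum_{k\neq i}\phi_{ik}(x,\tau) = 1$ holds for every $i$, I would first rewrite the equation for $v_i$ as $\dot v_i(t) = \bar v_i(t) - v_i(t)$, where $\bar v_i(t) := \sum_k \phi_{ik}(t)\,v_k(t-\tau)$ is a convex combination of $\{v_k(t-\tau)\}_{k\neq i}$. Duhamel's formula then expresses $v_i(t)$ itself as a convex combination of $v_i(0)$ and the targets $\{\bar v_i(s):s\in[0,t]\}$, and iterating on the delay intervals $[n\tau,(n+1)\tau]$ shows that $v_i(t)$ remains in the convex hull of the initial velocity profiles for every $t\geq 0$. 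In particular $|v_i(t)|\leq R_v$, and the triangle inequality gives $|x_k(t-\tau) - x_i(t)| \leq d_X(t-\tau) + R_v\tau =: \bar X(t)$; together with $\psi\leq 1$ this yields the pointwise weight bound $\phi_{ik}(t) \geq \psi(\bar X(t))/(N-1)$.

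Next I would take a pair $(i^*,j^*)$ realizing $d_V(t)$, set $e := (v_{i^*}-v_{j^*})/d_V(t)$, and observe that
\begin{equation*}
\frac{d^+}{dt}d_V(t) = e\cdot\bigl(\bar v_{i^*}(t) - \bar v_{j^*}(t)\bigr) - d_V(t).
\end{equation*}
The residual $e\cdot(\bar v_{i^*}-\bar v_{j^*})$ is a difference of two expectations under the probability vectors $(\phi_{i^*k})_k$ and $(\phi_{j^*k})_k$, so it is bounded by the total-variation distance between them times the diameter $d_V(t-\tau)$ of $\{e\cdot v_k(t-\tau)\}$. The uniform lower bound on the weights forces these two vectors to share a common mass whose total is controlled from below by $\psi(\bar X(t))$, producing the delayed dissipative inequality
\begin{equation*}
\frac{d^+}{dt}d_V(t) \leq -d_V(t) + \bigl(1 - \psi(\bar X(t))\bigr)\,d_V(t-\tau).
\end{equation*}

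With this inequality in hand, introduce the Lyapunov functional $\mathcal{L}(t) := d_V(t) + \int_{t-\tau}^t d_V(s)\,ds$. A short computation using $\dot{\bar X}(t) = \dot d_X(t-\tau) \leq d_V(t-\tau)$ and $\psi\geq 0$ gives $\dot{\mathcal{L}}(t) \leq -\psi(\bar X(t))\,\dot{\bar X}(t)$, and the change of variables $s \mapsto \bar X(s)$ integrates this to
\begin{equation*}
\mathcal{L}(t) + \int_{\bar X(0)}^{\bar X(t)}\psi(s)\,ds \leq \mathcal{L}(0) = d_V(0) + \int_{-\tau}^0 d_V(s)\,ds.
\end{equation*}
Assumption \eqref{ass1} then confines $\bar X(t)$ to the bounded interval $[\bar X(0),\bar X_\infty]$ determined implicitly by $\int_{\bar X(0)}^{\bar X_\infty}\psi = \mathcal{L}(0)$; consequently $\psi(\bar X(t))\geq \psi(\bar X_\infty)=:\psi_\infty > 0$ uniformly in $t$. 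Inserting this back into the dissipative inequality yields a Halanay-type inequality $\dot d_V(t) \leq -d_V(t) + (1-\psi_\infty)\,d_V(t-\tau)$, whose standard treatment produces exponential decay $d_V(t) \leq (\max_{s\in[-\tau,0]}d_V(s))\,e^{-Ct}$ with $C$ depending only on $\psi_\infty$ and $\tau$. The spatial bound $\sup_t d_X(t)<\infty$ follows from $d_X(t)\leq \bar X(t)\leq \bar X_\infty$, and global existence is standard given the a priori bounds together with Remark \ref{rem:existence}.

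The hardest part is the second step. The trivial bound $|\bar v_{i^*}-\bar v_{j^*}|\leq d_V(t-\tau)$ kills all dissipation, while a naive pointwise lower bound on each individual weight produces a spurious $1/(N-1)$ factor that would prevent closing the Lyapunov argument under the integral condition as stated. The total-variation comparison of the two probability vectors $(\phi_{i^*k})_k$ and $(\phi_{j^*k})_k$, which exploits that their common mass is controlled from below by $\psi(\bar X)$ through the uniform weight bound, is the key device that makes the coefficient sharp and essentially independent of $N$. A secondary technicality is that $d_V$ is merely Lipschitz continuous, so the derivative must be interpreted in the Dini sense and an envelope-type argument is needed to handle discontinuities when the maximizing pair $(i^*,j^*)$ changes with $t$.
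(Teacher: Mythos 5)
Your proposal follows the paper's proof almost step for step: the a priori bound $|v_i|\leq R_v$ (the paper's Lemma \ref{lem_bdd}, proved there by a differential inequality and a continuation argument rather than Duhamel, but to the same effect), the weight lower bound $\phi_{ik}\geq \psi(d_X(t-\tau)+R_v\tau)/N$, the delayed dissipative inequality of Lemma \ref{lem_sddi}, a Lyapunov functional whose decay reproduces \eqref{zwischenstep} and converts \eqref{ass1} into a uniform bound $d_X(t-\tau)+R_v\tau\leq d_*$, and finally the Halanay-type Lemma \ref{lem_gron}. The only genuinely different ingredient is the device producing the coefficient $1-\psi(\bar X)$: you use a total-variation/coupling bound on the difference of the two expectations under $(\phi_{i^*k})_k$ and $(\phi_{j^*k})_k$, whereas the paper proves a convex-hull diameter lemma (Lemma \ref{prop:aux}) stating $\mathrm{diam}(\Omega_\kappa)\leq(1-\kappa N)d_V$ when all weights are $\geq\kappa$. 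These are two phrasings of the same ``common mass'' idea, so the comparison is a wash in terms of what each buys.

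There is, however, a concrete problem with your key step as written, namely the claim that the common mass $\sum_k \min(\phi_{i^*k},\phi_{j^*k})$ is bounded below by $\psi(\bar X(t))$. The diagonal weights vanish, $\phi_{i^*i^*}=\phi_{j^*j^*}=0$, so two of the $N$ terms in the sum are zero and the pointwise bound $\phi_{ik}\geq\psi(\bar X)/(N-1)$ only yields a common mass of $\tfrac{N-2}{N-1}\,\psi(\bar X)$, giving the coefficient $1-\tfrac{N-2}{N-1}\psi(\bar X)$ rather than $1-\psi(\bar X)$. This is not merely cosmetic: for $N=2$ the common mass is exactly zero, and indeed the exact two-particle reduction in Section \ref{sec_nur} gives $\tfrac{d}{dt}(v_1-v_2)=-(v_1-v_2)(t-\tau)-(v_1-v_2)(t)$, whose delayed term carries coefficient $1$, not $1-\psi$; so the inequality of Lemma \ref{lem_sddi} with coefficient $1-\psi$ cannot hold for $N=2$ with, say, $\psi\equiv 1$. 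For $N\geq 3$ the loss is a factor $\tfrac{N-2}{N-1}$, which still closes the Lyapunov argument but only under an $N$-dependent modification of \eqref{ass1}. You should be aware that the paper's own proof has exactly the same blind spot: in Lemma \ref{lem_sddi} it asserts $\phi_{ik}\geq\kappa$ for \emph{all} $k$ and applies Lemma \ref{prop:aux}, even though $\phi_{ii}=0<\kappa$, so the hypothesis of that lemma is not met by the actual weight vectors. In short, you have faithfully reproduced the paper's argument, including the one step that needs repair; to make either version airtight one must account for the two excluded diagonal indices, e.g.\ by tracking the self-referencing terms $\phi_{ij^*}$ and $\phi_{j^*i^*}$ separately or by accepting the $\tfrac{N-2}{N-1}$ degradation and adjusting the flocking condition accordingly.
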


\begin{remark}
The assumption \eqref{ass1} can be understood, for a fixed integrable influence function $\psi$,
as a condition for smallness of the delay $\tau$. Indeed, considering a fixed initial datum with
$d_V(s)\equiv: d_V^0>0$ constant for $s\in [-\tau,0]$ and $d_X(-\tau)\equiv: d_X^0\geq 0$ for all $\tau>0$ , then \eqref{ass1} reads
\[
   (1+\tau) d_V^0 < \int_{d_X^0 + R_v \tau}^\infty \psi(s)\,ds.
\]
Clearly, the left-hand side increases with increasing $\tau$, while the right-hand side decreases.
So, generically, it is necessary to choose $\tau$ sufficiently small in order to satisfy the flocking condition.
This is often the case in alignment models with delay, see, e.g., \cite{EHS}.

On the other hand, if the influence function $\psi$ has a heavy tail, i.e.,
\[
   \int^\infty \psi(s)\,d s = \infty,
\]
then assumption \eqref{ass1} is satisfied for any initial datum and any $\tau \geq 0$,
which is a situation usually called \emph{unconditional flocking}, see, e.g., \cite{CS1, CS2} and \cite{Liu-Wu}.
\end{remark}

\begin{remark}\label{rem:no_delay}
If there is no time delay, i.e., $\tau = 0$,
then our system \eqref{main_eq}--\eqref{phi} becomes identical to the one studied in \cite{Motsch-Tadmor}.
There, the unconditional flocking estimate is obtained under the assumption $\int^\infty \psi(s)^2\,ds = \infty$.
Thus, our result in Theorem \ref{thm_main} can be seen as a refinement of the unconditional flocking condition of \cite{Motsch-Tadmor}. 
\end{remark}

For the proof of Theorem \ref{thm_main} we will need several auxiliary results.

\begin{lemma}\label{lem_bdd}
Let $R_v > 0$ be given by \eqref{Rv} and
let $(\bx,\bv)$ be a local-in-time $\mathcal{C}^1$-solution of the system \eqref{main_eq}--\eqref{IC0}
constructed in Remark \ref{rem:existence}.
Then the solution is global in time and satisfies
\[
   \max_{1 \leq i \leq N} |v_i(t)| \leq R_v \quad \mbox{for} \quad t \geq -\tau.
\]
\end{lemma}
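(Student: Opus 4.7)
The plan is to exploit the convex combination structure of the velocity equation, which is built into the normalization of the weights $\phi_{ik}$. Observing that $\sum_{k\neq i}\phi_{ik}(x,\tau)=1$ for all $i$, the equation for $v_i$ rewrites as
\[
   \tot{v_i(t)}{t} + v_i(t) = \sum_{k\neq i} \phi_{ik}(x,\tau)\, v_k(t-\tau),
\]
so that a Duhamel-type integration against the factor $e^t$ yields, for any $t\geq 0$,
\[
   v_i(t) = v_i(0)e^{-t} + \int_0^t e^{-(t-s)} \sum_{k\neq i} \phi_{ik}(x(s),\tau)\, v_k(s-\tau)\,\d s.
\]
Since $\phi_{ik}\geq 0$ and the $\phi_{ik}$ sum to $1$ over $k\neq i$, the integrand is a convex combination of the delayed velocities $v_k(s-\tau)$, bounded in modulus by $\max_k |v_k(s-\tau)|$.

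From here the argument proceeds by an induction over the delay intervals $[n\tau,(n+1)\tau]$, $n=0,1,2,\ldots$, combined with a continuation argument. On $[0,\tau]\cap[0,T^\ast)$, where $[0,T^\ast)$ is the maximal interval of existence supplied by Remark \ref{rem:existence}, the delayed velocities $v_k(s-\tau)=v_k^0(s-\tau)$ are given by the initial data, hence bounded by $R_v$. Plugging this into the Duhamel formula gives
\[
   |v_i(t)| \leq |v_i(0)|e^{-t} + R_v(1-e^{-t}) \leq R_v
\]
on $[0,\tau]\cap[0,T^\ast)$. The same estimate then propagates inductively to $[n\tau,(n+1)\tau]\cap[0,T^\ast)$, yielding $\max_i|v_i(t)|\leq R_v$ on the entire existence interval.

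The final step is to upgrade this a priori bound to global existence. With $|v_i|\leq R_v$, positions grow at most linearly, so $\bx$ remains bounded on every finite time interval; then the positivity of $\psi$ guarantees that the denominators in \eqref{phi} stay bounded below by a positive constant on such an interval, and the right-hand side of \eqref{main_eq} remains locally Lipschitz and bounded. Standard continuation for ODEs/DDEs therefore rules out blow-up in finite time, so $T^\ast=\infty$ and the stated bound holds for all $t\geq -\tau$ (the interval $[-\tau,0]$ being already covered by the definition of $R_v$). The key point driving the whole argument is the convex-combination identity $\sum_{k\neq i}\phi_{ik}=1$; there is no genuine obstacle, only the mild bookkeeping of the delay-interval induction.
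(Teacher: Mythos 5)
Your proposal is correct. The key observation---that the normalization identity $\sum_{k}\phi_{ik}=1$ turns the velocity equation into relaxation towards a convex combination of delayed velocities, so that $\max_i|v_i|$ can never exceed its running past maximum---is exactly the one the paper uses. The technical execution differs, though: the paper works with the differential inequality $\tot{|v_i(t)|}{t}\leq R_v^\eps-|v_i(t)|$ obtained from $\tfrac12\tot{|v_i|^2}{t}$, inflates the target bound to $R_v^\eps=R_v+\eps$, and runs a continuation-by-contradiction argument on the set where the strict bound holds, finally letting $\eps\to0$; you instead integrate via Duhamel and induct over the delay intervals $[n\tau,(n+1)\tau]$ (the method of steps). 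Your route buys a couple of small simplifications: the induction resolves the self-referential nature of the bound (delayed velocities at times $t-\tau\geq 0$ are themselves being estimated) without the $\eps$-inflation and the limit $\eps\to 0$, and the integral representation sidesteps the paper's case distinction about differentiating $|v_i(t)|$ where it vanishes. You are also more explicit than the paper's proof about the continuation step to global existence (positions grow at most linearly, hence the denominators in \eqref{phi} stay bounded below by $(N-1)\psi(D_T)>0$ on any finite interval, by the positivity and monotonicity of $\psi$); the paper essentially delegates this point to a remark. Both arguments are sound and yield the same conclusion.
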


\begin{proof}
We first notice that the equations $\eqref{main_eq}_2$ can be rewritten as
\bq\label{est_bdd}
   \tot{v_i(t)}{t} = \sum_{k=1}^N \phi_{ik}(x,\tau)v_k(t - \tau) - v_i(t),
\eq
due to the identity
\bq\label{eq_nor}
   \sum_{k=1}^N \phi_{ik}(x,\tau) = 1 \qquad\mbox{for } i = 1,\dots, N.
\eq
Choose any $\eps>0$, set $R_v^\eps := R_v + \eps$ and
\[
   \ms^\eps := \lt\{ t>0 :\, \max_{1 \leq i \leq N}|v_i(s)| < R_v^\eps \quad \mbox{for} \quad s \in [0,t) \rt\}.
\]
By the assumption, we have $\ms^\eps \neq \emptyset$.
Set $T_*^\eps := \sup \ms^\eps > 0$. We then claim $T_*^\eps = \infty$.
For contradiction, suppose $T_*^\eps < \infty$. This yields
\bq\label{est_claim}
   \lim_{t \to T_*^\eps-} \; \max_{1 \leq i \leq N}|v_i(t)| = R_v^\eps.
\eq
On the other hand, from \eqref{est_bdd} and \eqref{eq_nor} it follows that for $t  < T_*^\eps$ and any $i=1,\dots,N$,
$$\begin{aligned}
\frac12\tot{|v_i(t)|^2}{t} &\leq \sum_{k=1}^N \phi_{ik}(x,\tau)|v_k(t - \tau)||v_i(t)| - |v_i(t)|^2\cr
&\leq \max_{1 \leq k \leq N}|v_k(t - \tau)||v_i(t)| - |v_i(t)|^2\cr
&\leq R_v^\eps|v_i(t)| - |v_i(t)|^2.
\end{aligned}$$
Now, if $|v_i(t)| \neq 0$, we use the identity $\frac12\tot{|v_i(t)|^2}{t} = |v_i(t)| \tot{|v_i(t)|}{t}$
and we may divide the above inequality by $|v_i(t)|$.
On the other hand, if $|v_i(t)| \equiv 0$ on an open subinterval of $[0,T_*^\eps)$,
then $\tot{|v_i(t)|}{t} \equiv 0 \leq R_v^\eps  - |v_i(t)|$ on this subinterval.
Thus, we obtain
\[
   \tot{|v_i(t)|}{t} \leq R_v^\eps - |v_i(t)| \quad\mbox{a.e. on } (0,T_*^\eps), \qquad |v_i(0)| < R_v^\eps,
\]
which implies, due to the continuity of $|v_i(t)|$ on $[0,T_*^\eps)$,
\[
   |v_i(t)| \leq \lt(|v_i(0)| - R_v^\eps \rt)e^{-t} + R_v^\eps  \qquad\mbox{for } t  < T_*^\eps.
\]
Consequently,
\[
   \lim_{t \to T_*^\eps-} \; \max_{1 \leq i \leq N} |v_i(t)| \leq \lt(\max_{1 \leq i \leq N}|v_i(0)| - R_v^\eps \rt)e^{-T_*} + R_v^\eps < R_v^\eps,
\]
due to $\max_{1 \leq i \leq N}|v_i(0)| < R_v^\eps$. This is a contradiction to \eqref{est_claim} and we conclude that $T_*^\eps = \infty$.
Finally, we complete the proof by taking the limit $\eps\to 0$.
\end{proof}

In the sequel we will need the following auxiliary lemma:

\begin{lemma}\label{prop:aux}
Let $v_1, \dots, v_N\in\R^d$ be any set of vectors and denote $d_V:=\max_{1 \leq i,j \leq N}|v_i - v_j|$.
Fix $0 < \kappa \leq 1/N$ and set
\[
   \Omega_\kappa := \left\{ \sum_{i=1}^N \psi_i v_i; \mbox{ with } \psi_i\geq\kappa,\, i=1,\dots,N,\; \sum_{i=1}^N \psi_i = 1 \right\}.
\]
Then $\mathrm{diam}(\Omega_\kappa) \leq (1-\kappa N)d_V$.
\end{lemma}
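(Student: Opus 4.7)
The plan is to exploit the fact that any element of $\Omega_\kappa$ contains a ``guaranteed common part'' coming from the minimum weight $\kappa$, and that only the residual piece can contribute to the diameter. After subtracting this common part, what remains is a scaled convex combination, whose diameter is controlled by the diameter of the vertex set $\{v_1,\dots,v_N\}$, which is exactly $d_V$.

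Concretely, pick two arbitrary points $u = \sum_{i=1}^N \alpha_i v_i$ and $w = \sum_{i=1}^N \beta_i v_i$ in $\Omega_\kappa$, with $\alpha_i,\beta_i \geq \kappa$ and $\sum_i \alpha_i = \sum_i \beta_i = 1$. Define $\tilde\alpha_i := \alpha_i - \kappa$ and $\tilde\beta_i := \beta_i - \kappa$. Then $\tilde\alpha_i,\tilde\beta_i \geq 0$ and $\sum_i \tilde\alpha_i = \sum_i \tilde\beta_i = 1-\kappa N \geq 0$ (using $\kappa \leq 1/N$). Setting $\bar v := \frac{1}{N}\sum_{i=1}^N v_i$, I would rewrite
\[
   u \;=\; \kappa N \bar v \;+\; (1-\kappa N)\,u', \qquad w \;=\; \kappa N \bar v \;+\; (1-\kappa N)\,w',
\]
where $u' := \sum_{i=1}^N \frac{\tilde\alpha_i}{1-\kappa N} v_i$ and $w' := \sum_{i=1}^N \frac{\tilde\beta_i}{1-\kappa N} v_i$ are genuine convex combinations of $v_1,\dots,v_N$ (the case $1-\kappa N = 0$ being trivial since then $u=w=\bar v$).

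The common term $\kappa N \bar v$ cancels upon subtraction, yielding $u - w = (1-\kappa N)(u'-w')$. Since $u',w'$ lie in the convex hull of $\{v_1,\dots,v_N\}$, and the diameter of the convex hull of a finite set equals the diameter of the set itself, one has $|u' - w'| \leq d_V$. Hence $|u-w| \leq (1-\kappa N) d_V$, and taking the supremum over $u,w \in \Omega_\kappa$ gives the claim.

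I do not expect any serious obstacle here; the only point worth being slightly careful about is justifying $|u'-w'| \leq d_V$, which follows from the elementary observation that for any convex combination $\sum_i \lambda_i v_i$ and $\sum_i \mu_i v_i$ we can write their difference as a convex combination of the pairwise differences $v_i - v_j$, each of norm at most $d_V$ — or alternatively from the standard fact $\mathrm{diam}(\mathrm{conv}\,S) = \mathrm{diam}(S)$.
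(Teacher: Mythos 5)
Your proof is correct, and it takes a genuinely different (and shorter) route than the paper. The paper constructs explicit vertices $\bar v_i = \sum_j \xi_{ij} v_j$ with $\xi_{ii}=1-(N-1)\kappa$, $\xi_{ij}=\kappa$ for $i\neq j$, proves by a two-way inclusion that $\Omega_\kappa$ is exactly the convex hull of $\{\bar v_1,\dots,\bar v_N\}$, and then bounds $|\bar v_i-\bar v_j|\leq(1-\kappa N)|v_i-v_j|$ directly (in fact one has equality there, since $\bar v_i-\bar v_j=(1-\kappa N)(v_i-v_j)$). You instead peel off the guaranteed common part $\kappa N\bar v$ from both points, so that the difference collapses to $(1-\kappa N)(u'-w')$ with $u',w'$ in $\mathrm{conv}\{v_1,\dots,v_N\}$, and invoke $\mathrm{diam}(\mathrm{conv}\,S)=\mathrm{diam}(S)$ — which you correctly justify via $u'-w'=\sum_{i,j}\lambda_i\mu_j(v_i-v_j)$. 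Both arguments ultimately rest on the same geometric fact that $\Omega_\kappa$ is a homothetic copy of the simplex $\mathrm{conv}\{v_1,\dots,v_N\}$ with ratio $1-\kappa N$; your version avoids the bookkeeping of the two inclusions and is what I would call the cleaner write-up, while the paper's version additionally identifies the extreme points of $\Omega_\kappa$ and makes visible that the bound $(1-\kappa N)d_V$ is attained. You also handle the degenerate case $\kappa=1/N$ explicitly, which the paper glosses over (there the formula $\lambda_i=(\psi_i-\kappa)/(1-\kappa N)$ would divide by zero). No gaps.
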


\begin{proof} 
For $i, j = 1,\dots, N$ set $\xi_{ij}:=\kappa$ if $i\neq j$ and $\xi_{ii}:=1 - (N-1)\kappa$.
Moreover, define
\[
   \bar v_i := \sum_{j=1}^N \xi_{ij} v_j \qquad\mbox{for } i = 1,\dots, N.
\]
We prove that the set $\Omega_\kappa$ equals to the convex hull of the vectors $\bar v_1, \dots, \bar v_N$.
Indeed, let $w\in\R^d$ be a convex combination of $\bar v_1, \dots, \bar v_N$, i.e., there exist scalars
$0 \leq \lambda_i \leq 1$ such that $\sum_{i=1}^N \lambda_i = 1$ and
\[
   w = \sum_{i=1}^N \lambda_i \bar v_i = \sum_{i=1}^N \sum_{j=1}^N \lambda_i \xi_{ij} v_j.
\]
Defining $\psi_j := \sum_{i=1}^N \lambda_i \xi_{ij}$, it is easy to prove that
$\kappa\leq \psi_j$ and $\sum_{j=1}^N \psi_j = 1$.
Consequently, $w\in\Omega_\kappa$.

On the other hand, if $w\in\Omega_\kappa$, i.e.,
\[
   w = \sum_{i=1}^N \psi_i v_i \qquad\mbox{with } \psi_i\geq\kappa \mbox{ for } i=1,\dots,N,\; \sum_{i=1}^N \psi_i = 1,
\]
then it can be easily checked that $w$ can be written as the convex combination
\[
   w = \sum_{i=1}^N \lambda_i \bar v_i \qquad\mbox{with } \lambda_i = \frac{\psi_i-\kappa}{1-\kappa N} \geq 0,\;
   \sum_{i=1}^N \lambda_i = 1.
\]

Finally, a direct calculation gives
\[
   |\bar v_i - \bar v_j| \leq (1-\kappa N) |v_i - v_j| \qquad\mbox{for any } i,j=1,\dots,N,
\]
so that
\[
   \mathrm{diam}(\Omega_\kappa) = \max_{1 \leq i,j \leq N}|\bar v_i - \bar v_j| \leq (1-\kappa N) \max_{1 \leq i,j \leq N}|v_i - v_j|
    = (1-\kappa N) d_V.   
\]
\end{proof}

In the lemma below we derive the differential inequalities for the spatial and, resp., velocity diameters
$d_X$ and, resp., $d_V$, which will be instrumental for proving Theorem \ref{thm_main}.

\begin{lemma}\label{lem_sddi}
Let $R_v > 0$ be given by \eqref{Rv} and
let $(\bx,\bv)$ be the global $\mathcal{C}^1$-solution of the system \eqref{main_eq}--\eqref{IC0}
constructed in Lemma \ref{lem_bdd}.
Then for almost all $t>0$ we have
$$\begin{aligned}
\tot{}{t}d_X(t) &\leq d_V(t), \cr
\tot{}{t} d_V(t) &\leq (1 - \psi(d_X(t) + R_v \tau))d_V(t - \tau) - d_V(t).
\end{aligned}$$
\end{lemma}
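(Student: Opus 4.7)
The plan is to handle the two inequalities separately, using the standard device of selecting extremal indices at each time $t$ where $d_X$ and $d_V$ are differentiable (both are Lipschitz as maxima of finitely many $C^1$ functions).

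For the spatial bound, I would pick indices $i^*(t), j^*(t)$ realizing $d_X(t) = |x_{i^*}-x_{j^*}|$ at a differentiability point and simply compute
\[
\tot{d_X}{t} = \left\langle \frac{x_{i^*}-x_{j^*}}{|x_{i^*}-x_{j^*}|},\, v_{i^*}-v_{j^*}\right\rangle \leq |v_{i^*}-v_{j^*}| \leq d_V(t),
\]
which is immediate from Cauchy--Schwarz.

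For the velocity bound, the key first move is to exploit the normalization $\sum_k \phi_{ik}=1$ to rewrite the system in the form \eqref{est_bdd}, namely $\tot{v_i}{t} = w_i(t) - v_i(t)$ with $w_i(t) := \sum_k \phi_{ik}(\mathbf{x},\tau)\,v_k(t-\tau)$. Picking indices $i^*, j^*$ realizing $d_V(t)$, a direct differentiation of $|v_{i^*}-v_{j^*}|^2$ yields
\[
\tot{d_V}{t} \leq \frac{\langle v_{i^*}-v_{j^*},\, w_{i^*}-w_{j^*}\rangle}{|v_{i^*}-v_{j^*}|} - d_V(t),
\]
so the entire task reduces to controlling the inner product on the right by $(1-\psi(d_X(t)+R_v\tau))\,d_V(t-\tau)$. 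Here Lemma \ref{lem_bdd} is indispensable: because $|v_k(s)|\leq R_v$ for $s\geq -\tau$, the displacement over a delay interval satisfies $|x_k(t-\tau)-x_k(t)|\leq R_v\tau$, so $|x_k(t-\tau)-x_i(t)| \leq d_X(t)+R_v\tau$ for every pair $i,k$. Monotonicity of $\psi$ then forces $\psi(|x_k(t-\tau)-x_i(t)|) \geq \psi(d_X(t)+R_v\tau) =: \bar\psi$, which, together with the upper bound $\psi\leq 1$ in the denominator of \eqref{phi}, gives the uniform lower estimate $\phi_{ik}\geq \bar\psi/(N-1)$ for all $k\neq i$.

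With this lower bound, both $w_{i^*}$ and $w_{j^*}$ are convex combinations of the $N$ vectors $\{v_k(t-\tau)\}$ whose off-diagonal coefficients are uniformly bounded below. I would then apply Lemma \ref{prop:aux}: interpreting the right-hand side of \eqref{est_bdd} as a point in the set $\Omega_\kappa$ associated with $\kappa=\bar\psi/(N-1)$ (so that $\kappa(N-1)=\bar\psi$) gives a diameter estimate of the form $(1-\bar\psi)\,d_V(t-\tau)$ for the relevant convex combinations, closing the argument. The main obstacle I expect is precisely the reconciliation between Lemma \ref{prop:aux}, which demands a strictly positive lower bound on \emph{all} weights, and the fact that $\phi_{ii}=0$. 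The natural remedy is to apply the auxiliary lemma on the $N-1$ vectors $\{v_k(t-\tau): k\neq i^*\}$ for $w_{i^*}$ and symmetrically for $w_{j^*}$, exploiting the alignment of $v_{i^*}-v_{j^*}$ with $w_{i^*}-w_{j^*}$ to absorb the discrepancy between the two support sets. A secondary, mostly notational, subtlety is that $d_V$ is only Lipschitz, so all derivatives must be interpreted in the a.e.\ sense on intervals where the maximizing pair is locally unique, handled by a standard Danskin/Rademacher-type argument.
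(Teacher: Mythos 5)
Your route is essentially the paper's: rewrite $\tot{v_i}{t}$ using the normalization $\sum_k\phi_{ik}=1$, select extremal indices, bound $|x_k(t-\tau)-x_i(t)|$ by a spatial diameter plus $R_v\tau$ via Lemma \ref{lem_bdd}, deduce a uniform lower bound on the off-diagonal weights, and invoke Lemma \ref{prop:aux} to control $|w_{i^*}-w_{j^*}|$ by $(1-\psi(\cdot))\,d_V(t-\tau)$. Two cosmetic differences: the paper handles the nonsmoothness of $d_V$ by partitioning $[0,\infty)$ into countably many intervals on which the maximizing pair is fixed, rather than by a Rademacher/Danskin argument; and the paper's proof writes $x_k(t-\tau)-x_i(t)=x_k(t-\tau)-x_i(t-\tau)-\int_{t-\tau}^t \tot{}{t}x_i(s)\,\d s$ and so produces $\psi(d_X(t-\tau)+R_v\tau)$, whereas your direct estimate gives the $d_X(t)$ form appearing in the statement; both derivations are legitimate.

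The one point where you depart from the paper is exactly the step you yourself flag as the ``main obstacle,'' and your proposed remedy does not constitute a proof. The paper sets $\kappa:=\psi(\cdot)/N$, asserts $\phi_{ik}\geq\kappa$, and applies Lemma \ref{prop:aux} to the full collection $\{v_k(t-\tau)\}_{k=1}^N$, obtaining the diameter bound $(1-\kappa N)d_V(t-\tau)=(1-\psi(\cdot))d_V(t-\tau)$ in one stroke. Your alternative --- applying Lemma \ref{prop:aux} separately to the $(N-1)$-point families $\{v_k(t-\tau):k\neq i^*\}$ and $\{v_k(t-\tau):k\neq j^*\}$ --- cannot close the argument as described: the lemma bounds the diameter of a \emph{single} set $\Omega_\kappa$, whereas $w_{i^*}$ and $w_{j^*}$ then lie in two different sets built on different collections of vectors, and nothing in the lemma controls the distance between points of distinct such sets. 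The case $N=2$ makes the difficulty concrete: there $w_1=v_2(t-\tau)$, $w_2=v_1(t-\tau)$, the two ``convex hulls'' are singletons, and $|w_1-w_2|=d_V(t-\tau)$ with no factor $(1-\psi)$ to be gained from any diameter bound; the ``alignment'' of $v_{i^*}-v_{j^*}$ with $w_{i^*}-w_{j^*}$ that you invoke would have to carry the entire estimate, and you do not supply that argument. (Your underlying concern is legitimate --- the diagonal weight $\phi_{ii}=0$ does not satisfy the hypothesis $\psi_i\geq\kappa$ of Lemma \ref{prop:aux}, a point the paper passes over silently --- but to complete the proof you must either justify $w_{i^*},w_{j^*}\in\Omega_\kappa$ for the full $N$-point set, as the paper implicitly does, or quantify the loss caused by the vanishing diagonal weights; as written, the decisive inequality $|w_{i^*}-w_{j^*}|\leq(1-\psi(d_X+R_v\tau))\,d_V(t-\tau)$ is left unproven.)
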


\begin{proof}
Due to the continuity of the velocity trajectories $v_i(t)$,
there is an at most countable system of open, mutually disjoint
intervals $\{\mathcal{I}_\sigma\}_{\sigma\in\N}$ such that
$$
   \bigcup_{\sigma\in\N} \overline{\mathcal{I}_\sigma} = [0,\infty)
$$
and for each ${\sigma\in\N}$ there exist indices $i(\sigma)$, $j(\sigma)$
such that
$$
   d_V(t) = |v_{i(\sigma)}(t) - v_{j(\sigma)}(t)| \quad\mbox{for } t\in \mathcal{I}_\sigma.
$$
Then, using the abbreviated notation $i:=i(\sigma)$, $j:=j(\sigma)$,
we have for every $t\in \mathcal{I}_\sigma$,
$$
\begin{aligned}
\frac12\tot{}{t} d_V(t)^2 &= (v_i(t) - v_j(t)) \cdot \lt(\tot{v_i(t)}{t} - \tot{v_j(t)}{t}\rt)\cr
&= (v_i(t) - v_j(t)) \cdot \lt(\sum_{k=1}^N \phi_{ik}(x,\tau)v_k(t - \tau) -  \sum_{k=1}^N \phi_{jk}(x,\tau)v_k(t - \tau)\rt)\cr
&\quad  - |v_i(t) - v_j(t)|^2.
\end{aligned}
$$
Using $\eqref{main_eq}_1$, we estimate for any $1 \leq i, k \leq N$,
$$\begin{aligned}
|x_k(t - \tau) - x_i(t)| &= \lt|x_k(t-\tau) - x_i(t-\tau) -\int^{t}_{t-\tau} \tot{}{t}{x}_i(s)\,ds\rt| \cr
&\leq |x_k(t-\tau) - x_i(t-\tau)| + \tau \sup_{s \in (t-\tau, t)}|v_i(s)|.
\end{aligned}$$
Then, Lemma \ref{lem_bdd} gives
\[
   |x_k(t - \tau) - x_i(t)| \leq d_X(t-\tau) + R_v \tau,
\]
and due to the monotonicity properties of the influence function $\psi$ (Assumption \ref{ass:psi}),
\[
  \phi_{ik}(x,\tau) \geq \frac{\psi(d_X(t-\tau) + R_v \tau)}{N}. 
\]
Thus, denoting $\kappa:=\psi(d_X(t-\tau) + R_v \tau)/N$, we have $\phi_{ik}(x,\tau) \geq\kappa$ and $\sum_{k=1}^N \phi_{ik}(x,\tau) = 1$.
An application of Lemma \ref{prop:aux} implies then
\[
   \lt|\sum_{k=1}^N \phi_{ik}(x,\tau)v_k(t - \tau) -  \sum_{k=1}^N \phi_{jk}(x,\tau)v_k(t - \tau)\rt| \leq (1 - \kappa N) d_V(t-\tau),
\]
so that, for almost all $t>0$,
\[
   \frac12\tot{}{t} d_V(t)^2 \leq (1-\psi(d_X(t-\tau) + R_v \tau)) d_V(t-\tau) d_V(t) - d_V(t)^2.
\]
Now, if $d_V(t) \neq 0$, we use the identity $\frac12\tot{}{t} d_V(t)^2 = d_V(t) \tot{}{t} d_V(t)$
and we may divide the above inequality by $d_V(t)$.
On the other hand, if $d_V(t) \equiv 0$ on an open subinterval of $[0,\infty)$,
then $\tot{}{t} d_V(t) \equiv 0 \leq \lt(1 - \psi(d_X(t-\tau) + R_v \tau)\rt)d_V(t - \tau) - d_V(t)$ on this subinterval.
Consequently, we have
\[
   \tot{}{t}d_V(t) \leq \lt(1 - \psi(d_X(t-\tau) + R_v \tau)\rt)d_V(t - \tau) - d_V(t)
\]
for almost all $t>0$.
\end{proof}

Finally, we prove the following generalized Gronwall-type inequality.

\begin{lemma}\label{lem_gron}
Let $u$ be a nonnegative, continuous and piecewise $\mc^1$-function satisfying,
for some constant $0 < a < 1$, the differential inequality
\bq\label{diff_ineq}
   \tot{}{t}  u(t) \leq (1-a) u(t -\tau) - u(t) \qquad\mbox{for almost all } t>0.
\eq
Then there exists a constant $0 < C < 1$ satisfying the equation
\bq\label{ass_C}
  1 - C = (1-a)e^{C\tau}
\eq
and such that the estimate holds
\( \label{Gronwall-like}
   u(t) \leq \left( \max_{s\in[-\tau,0]} u(s) \right) e^{-Ct} \qquad \mbox{for all } t \geq 0.
\)
\end{lemma}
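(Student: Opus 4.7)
The statement is essentially Halanay's classical inequality for delay differential inequalities, and the proof I have in mind splits into two parts: existence of the constant $C$, and a comparison argument producing the exponential bound. For the first part, define $F(C) := (1-C) - (1-a)e^{C\tau}$. This function is strictly decreasing on $[0,1]$ with $F(0) = a > 0$ and $F(1) = -(1-a)e^\tau < 0$, so the intermediate value theorem gives a unique $C \in (0,1)$ satisfying \eqref{ass_C}.

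For the estimate \eqref{Gronwall-like}, denote $M := \max_{s \in [-\tau,0]} u(s)$, fix an arbitrary $\eps > 0$, and introduce the comparison function $w_\eps(t) := (M+\eps)e^{-Ct}$. The plan is to prove $u(t) < w_\eps(t)$ for every $t \geq -\tau$ and then let $\eps \to 0^+$. On $[-\tau, 0]$ this is immediate, since $e^{-Ct} \geq 1$ there and hence $u(s) \leq M < M+\eps \leq w_\eps(s)$. Suppose for contradiction that the set $\{t > 0 : u(t) \geq w_\eps(t)\}$ is nonempty, and let $t^* > 0$ be its infimum. By continuity $u(t^*) = w_\eps(t^*)$ and $u(t) < w_\eps(t)$ for $-\tau \leq t < t^*$; in particular the strict inequality $u(t^*-\tau) < w_\eps(t^*-\tau)$ holds. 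Since $u - w_\eps$ is nonpositive on $[-\tau, t^*]$ and vanishes at $t^*$, its left derivative at $t^*$ (which exists because $u$ is piecewise $\mathcal{C}^1$) must be nonnegative, yielding $D^- u(t^*) \geq w_\eps'(t^*) = -C w_\eps(t^*)$. On the other hand, passing to the limit $t \to t^{*-}$ in \eqref{diff_ineq} along smooth points and using the defining relation \eqref{ass_C} gives the opposite strict inequality
\[
D^- u(t^*) \leq (1-a)u(t^*-\tau) - u(t^*) < (1-a)w_\eps(t^*-\tau) - w_\eps(t^*) = \bigl[(1-a)e^{C\tau} - 1\bigr] w_\eps(t^*) = -C w_\eps(t^*),
\]
which is the sought contradiction.

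The main subtle point is the handling of the piecewise $\mathcal{C}^1$ regularity of $u$: the classical derivative $u'(t^*)$ may not exist at the crossing time, so one must work throughout with the left derivative $D^- u(t^*)$ and justify that $u'(t) \to D^- u(t^*)$ as $t \to t^{*-}$ along points where $u$ is smooth (this follows from the definition of piecewise $\mathcal{C}^1$). The auxiliary parameter $\eps > 0$ plays an equally important role: without the strict margin between $u$ and $w_\eps$ on $[-\tau, t^*)$, the two bounds on $D^- u(t^*)$ collapse to equality rather than producing a contradiction, which is why the argument is first carried out for $w_\eps$ and only then $\eps$ is sent to zero.
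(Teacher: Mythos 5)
Your argument is correct and follows essentially the same route as the paper: a comparison with the exponential barrier defined via the characteristic equation \eqref{ass_C}, a strict margin to force a contradiction at the first crossing, and a final limit removing the margin. The only differences are cosmetic — you use an additive margin $\eps$ and the left derivative at the exact crossing time, whereas the paper uses a multiplicative margin $\lambda>1$ and sidesteps differentiability issues by selecting a differentiable point in $(T^\lambda, T^\lambda+\tau)$ just after the crossing (which also forces its case split $T^\lambda_*\lessgtr\tau$, a split your uniform bound $u<w_\eps$ on all of $[-\tau,t^*)$ avoids); you additionally supply the intermediate-value argument for the existence of $C$, which the paper leaves implicit.
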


\begin{proof}
Denote
\[
    \bar u := \max_{s\in[-\tau,0]} u(s), \qquad w(t) := \bar u e^{-Ct},
\]
and for any $\lambda > 1$ set
\[
   \ms^\lambda := \lt\{ t \geq 0 : u(s) \leq \lambda w(s) \quad \mbox{for} \quad s \in [0,t)\rt\}.
\]
Since $0 \in \ms^\lambda$, $T^\lambda := \sup \ms^\lambda$ exists. We claim that
\[
   T^\lambda = \infty \qquad \mbox{for any } \lambda >0.
\]
For contradiction, assume $T^\lambda < \infty$ for some $\lambda >1$.
Then, due to the continuity of $u$, there exists a $T^\lambda_* \in (T^\lambda, T^\lambda+\tau)$ such that
$u$ is differentiable at $T^\lambda_*$ and
\(  \label{est_derivatives}
   u(T^\lambda_*) > \lambda w(T^\lambda_*),\qquad   \tot{}{t} u(T^\lambda_*) \geq \lambda \tot{}{t} w(T^\lambda_*).
\)
Note that $w$ satisfies
\bq\label{est_g}
   w(t-\tau) = e^{C\tau}w(t) \quad \mbox{and} \quad \tot{}{t} w(t) = - C w(t).
\eq
It follows from \eqref{diff_ineq} that
\bq\label{est_diff}
   \tot{}{t} u(T^\lambda_*) \leq (1 - a) u(T^\lambda_* - \tau) - u(T^\lambda_*).
\eq
We now consider the following two cases:
\begin{itemize}
\item
If $0 \leq T^\lambda_* \leq \tau$, then, by definition, $u(T^\lambda_* - \tau) \leq \bar u$,
and we estimate the right-hand side of \eqref{est_diff} by
$$
\begin{aligned}
    \tot{}{t} u(T^\lambda_*)  &\leq (1-a) \bar u - u(T^\lambda_*)\cr
   &< (1-a) \lambda w(0) - \lambda w(T^\lambda_*)\cr
   &\leq \lt((1 - a)e^{C\tau} - 1\rt) \lambda w(T^\lambda_*),
\end{aligned}
$$
where used the inequality $w(0) \leq w(T^\lambda_*)e^{C\tau}$.
With the identities \eqref{ass_C} and \eqref{est_g}, we obtain
$$
   \tot{}{t} u(T^\lambda_*)  < - C \lambda w(T^\lambda_*)  = \lambda \tot{}{t} w(T^\lambda_*),
$$
which is a contradiction to \eqref{est_derivatives}.

\item
If $T^\lambda_* > \tau$, we have $u(T^\lambda_* - \tau) \leq \lambda w(T^\lambda_* - \tau)$
by construction of $T^\lambda_*$, and \eqref{est_diff} gives
\[
   \tot{}{t} u(T^\lambda_*) &<& (1-a) \lambda w(T^\lambda_* - \tau) - \lambda w(T^\lambda_*)\\
     &=& \lt((1 - a)e^{C\tau} - 1\rt) \lambda w(T^\lambda_*).
\]
Then, using the same argument as in the previous case, we obtain a contradiction to \eqref{est_derivatives}.
\end{itemize}
We conclude that, for every $\lambda>1$, $T^\lambda = \infty$ and $u(t) \leq \lambda w(t)$ for all $t\geq 0$.
Passing to the limit $\lambda \to 1$ yields the claim \eqref{Gronwall-like}.
\end{proof}

We are now ready to proceed with the proof of Theorem \ref{thm_main}.

\begin{proof}[Proof of Theorem \ref{thm_main}]
We introduce for $t > 0$ the following Lyapunov functional for the system \eqref{main_eq}--\eqref{IC0},
\[
   \Lyap(t) := d_V(t) + \int_{d_X(- \tau) + R_v \tau}^{d_X(t - \tau) + R_v \tau} \psi(s)\,\d s + \int_{-\tau}^0 d_V(t + s)\,\d s,
\]
where $R_v$ is given by \eqref{Rv} and the diameters $d_X(t)$, $d_V(t)$ by \eqref{dXdV}.
Then using Lemma \ref{lem_sddi}, we calculate, for almost all $t>0$,
$$
\begin{aligned}
   \tot{}{t}\Lyap(t) &= \tot{}{t} d_V(t) + \psi(d_X(t - \tau) + R_v \tau) \tot{}{t} d_X(t - \tau) + d_V(t) - d_V(t - \tau)\cr
   &\leq \lt(1 - \psi(d_X(t-\tau) + R_v \tau)\rt)d_V(t - \tau) - d_V(t) \cr 
   &\quad + \psi(d_X(t - \tau) + R_v \tau) d_V(t-  \tau) +d_V(t) - d_V(t - \tau)\cr
   &= 0.
\end{aligned}
$$
Thus, integrating over the time interval $(0,t)$, we obtain
\(  \label{zwischenstep}
   d_V(t) + \int_{d_X(- \tau) + R_v \tau}^{d_X(t - \tau) + R_v \tau} \psi(s)\,\d s + \int_{-\tau}^0 d_V(t + s)\,\d s \leq d_V(0)  + \int_{-\tau}^0 d_V(s)\,\d s.
\)
From assumption \eqref{ass1} it follows that there exists a $d_* > 0$ such that
\[
   d_V(0)  + \int_{-\tau}^0 d_V(s)\,\d s = \int_{d_X(-\tau) + R_v \tau}^{d_*}\psi(s)\,\d s.
\]
Combining with \eqref{zwischenstep}, we get
\[
   \int_{d_X(- \tau) + R_v \tau}^{d_X(t - \tau) + R_v \tau} \psi(s)\,\d s \leq \int_{d_X(-\tau) + R_v \tau}^{d_*}\psi(s)\,\d s,
\]
which implies
\[
   0  \leq \int_{d_X(t -\tau) + R_v \tau}^{d_*} \psi(s)\,\d s,
\]
so that
\[
   d_X(t -\tau) + R_v \tau \leq d_* \quad \mbox{for }  t > 0.
\]
Hence, by Lemma \ref{lem_sddi} and the monotonicity of $\psi$ we have, for almost all $t>0$,
\[
   \tot{}{t} d_V(t) \leq (1 - \psi_*)d_V(t - \tau) - d_V(t),
\]
where $\psi_* = \psi(d_*)$. We finally apply Lemma \ref{lem_gron} to complete the proof.
\end{proof}

%
\section{Measure-valued solutions: existence, stability, and large-time behavior }\label{sec_kin}
In this section we provide a proof of global existence of measure-valued solutions for the kinetic model \eqref{kin_mt}
and a stability estimate in Monge-Kantorowich-Rubinstein distance.
The stability estimate allows us to show rigorously that \eqref{kin_mt} is the mean-field limit
of the discrete system \eqref{main_eq}--\eqref{phi}.
Moreover, it allows to obtain a sufficient condition for flocking in the kinetic system,
analogous to \eqref{ass1}.

Let $\P_1(\R^{2d})$ be the set of probability measures on the phase space $\R^{2d}$
with bounded first-order moment.
The Monge-Kantorovich-Rubinstein distance, also called 1-Wasserstein distance,
is defined as follows.

\begin{definition}(Monge-Kantorovich-Rubinstein distance) \label{defdp}
Let $\rho^1, \rho^2\in \P_1(\R^{d})$ be two probability measures on $\R^{d}$. 
Then the Monge-Kantorovich-Rubinstein distance between $\rho^1$ and $\rho^2$ is defined as
\begin{equation*}\label{d1}
d_1(\rho^1,\rho^2) := \inf_{\gamma\in\Gamma(\rho^1,\rho^2)} \int_{\R^{2d}} |x-y|  \d\gamma(x,y) ,
\end{equation*}
where $\gamma\in\Gamma(\rho^1,\rho^2)$ is the set of transference plans, i.e.,
probability measures $\gamma$ on $\R^{d} \times \R^{d}$ with
marginals $\rho^1$ and $\rho^2$,
\[
   \int_{\R^{2d}} \xi(x) d\gamma(x,y) = \int_{\R^{d}} \xi(x) \d\rho^1(x),
\]
and
\[
   \int_{\R^{2d}} \xi(y) d\gamma(x,y) = \int_{\R^{d}} \xi(y) \d\rho^2(y),
\]
for all continuous and bounded functions $\xi \in \mathcal{C}_b(\R^{d})$.
\end{definition}

Note that $\P_1(\R^{2d})$ endowed with the Monge-Kantorovich-Rubinstein distance is a complete metric space.
Moreover, the Monge-Kantorovich-Rubinstein distance is equivalent to the Bounded Lipschitz distance,
i.e., for any $\rho^1,~ \rho^2\in \P_1(\R^{d})$,
\[\label{blipd}
   d_1(\rho^1,\rho^2) = \sup\left\{ \lt|\int_{\R^{d}} \varphi(\xi)\d\rho^1(\xi) -  \int_{\R^{d}} \varphi(\xi)\d\rho^2(\xi)\rt| ; \varphi \in \mbox{Lip}(\R^d), \mbox{ Lip}(\varphi) \leq 1\right\},
\]
where Lip($\R^{d}$) denotes the set of Lipschitz functions on $\R^{d}$ and Lip($\varphi$) is the Lipschitz constant of the function $\varphi \in$ Lip($\R^{d}$).

We also recall the definition of the push-forward of a measure by a mapping:
\begin{definition}\label{def:push-forward}
Let $\rho$ be a Borel measure on $\R^{d}$ and $\mathcal{T} :
\R^{d} \to \R^{d}$ be a measurable mapping. Then the push-forward
of $\rho$ by $\mathcal{T}$ is the measure $\mathcal{T} \# \rho$ defined by
\[
\mathcal{T} \# \rho(B) := \rho(\mathcal{T}^{-1}(B)) \qquad \mbox{for all Borel sets }  B\subset \R^d.
\]
\end{definition}

We next define the notion of measure-valued solutions to the equation \eqref{kin_mt}.

\begin{definition}\label{def_weak}
For a given $T >0$, we call $f_t \in \mc([0,T); \P_1(\R^{2d}))$ a \emph{measure-valued solution} of the equation \eqref{kin_mt}
on the time interval $[0,T)$, subject to the initial datum $g_s\in \mc([-\tau,0]; \P_1(\R^{2d}))$,
if for all compactly supported test functions $\xi \in \mc_c^\infty(\R^{2d} \times [0,T))$,
\bq\label{weak_for}
\int_0^T \int_{\R^{2d}} f_t\lt(\pa_t \xi + v \cdot \nabla_x \xi + F[f_{t-\tau}]\cdot\nabla_v\xi\rt)\,\d x\d v\d t
+ \int_{\R^{2d}} g_0(x,v)\xi(x,v,0)\,\d x\d v = 0,
\eq
where $F[f_{t-\tau}]$ is defined in \eqref{kin_mt}
and we adopt the notation $f_{t-\tau}:\equiv g_{t-\tau}$ for $t \in [0,\tau]$.
\end{definition}

Note that in the case of asymptotic flocking the distribution function $f=f(t,x,v)$ will concentrate
in the velocity variable as $t\to\infty$, hence the space of nonnegative (probability) measures
is the natural solution space for the kinetic model.


%
\subsection{Existence and uniqueness of solutions for the kinetic problem \eqref{kin_mt}}
In this part, we show the existence and uniqueness of measure-valued solutions to the kinetic equation \eqref{kin_mt} in the sense of Definition \ref{def_weak}.

The main result of this section is:
\begin{theorem}\label{main_thm2}
Let the initial datum $g_t \in  \mc([-\tau,0]; \P_1(\R^{2d}))$
and assume that $g_t$ is uniformly compactly supported in position and velocity on $[-\tau,0]$, i.e.,
there exists a constant $R > 0$ such that
\bq\label{kinetic_supp}
  \mbox{supp } g_t \subset B^{2d}(0,R) \qquad\mbox{for all } t \in [-\tau,0],
\eq
where $B^{2d}(0,R)$ denotes the ball of radius $R$ in $\R^{2d}$, centered at the origin.

Then for any $T > 0$, the kinetic equation \eqref{kin_mt} admits a unique measure-valued solution $f_t\in  \mc([0,T); \P_1(\R^{2d}))$
in the sense of Definition of \eqref{def_weak}, which is also uniformly compactly supported in position and velocity.
Furthermore, $f_t$ is determined as the push-forward of the density $f_0 := g_0$ through the flow map
generated by the locally Lipschitz velocity field $(v,F[f_{t-\tau}])$ in phase space.
\end{theorem}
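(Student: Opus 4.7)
The natural strategy exploits the delay structure: on each interval of length $\tau$ the delayed argument $f_{t-\tau}$ is already known (from the initial datum on $[0,\tau]$, then inductively), so \eqref{kin_mt} reduces there to a \emph{linear} continuity equation with a given nonlocal velocity field. This allows a construction by characteristics and avoids a genuine fixed-point argument for the nonlinear coupling; the full statement then follows by iterating over successive intervals of length $\tau$ until the desired horizon $T$ is exhausted.

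First, on $[0,\tau]$ we set $f_{t-\tau} :\equiv g_{t-\tau}$ and study the vector field $(x,v) \mapsto F[g_{t-\tau}](x,v)$. Using the uniform compact support \eqref{kinetic_supp} together with $\psi(0)=1$, positivity and monotonicity of $\psi$, I would bound the denominator $\int \psi(|x-y|) g_{t-\tau}(y,w)\,\d y\d w$ from below uniformly on any compact set of phase space, and then use Lipschitz continuity of $\psi$ to conclude that $F[g_{t-\tau}]$ is locally Lipschitz in $(x,v)$ and continuous in $t$. The characteristic system
\[
   \dot X(t) = V(t), \qquad \dot V(t) = F[g_{t-\tau}](X(t),V(t)), \qquad (X(0),V(0)) = (x,v),
\]
then admits unique local solutions. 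Since $F[g_{t-\tau}](x,v)$ is a convex combination of vectors of the form $w - v$ with $(y,w)$ in the support of $g_{t-\tau}$, the argument of Lemma \ref{lem_bdd} adapts verbatim to yield a bound $|V(t)| \leq R$ on $[0,\tau]$, and hence a linear-in-time bound on $|X(t)|$. The flow $\mathcal{T}_t : (x,v) \mapsto (X(t),V(t))$ is therefore globally defined on $[0,\tau]$ and maps $B^{2d}(0,R)$ into a compact set.

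Next, I set $f_t := \mathcal{T}_t \# g_0$ on $[0,\tau]$. A standard computation based on the change-of-variables formula applied to a test function $\xi \in \mathcal{C}_c^\infty(\R^{2d} \times [0,\tau))$ shows that $f_t$ satisfies the weak formulation \eqref{weak_for} on this interval, that $t \mapsto f_t$ is continuous in $d_1$, and that $\mathrm{supp}\, f_t \subset \mathcal{T}_t(\mathrm{supp}\, g_0)$ is compact. Because the support and velocity bounds for $f_t$ on $[0,\tau]$ are explicit, the step can be iterated: on $[\tau,2\tau]$ the delayed argument $f_{t-\tau}$ is now the function just constructed, still uniformly compactly supported, so the same arguments produce a unique flow and a push-forward solution. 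Proceeding inductively on intervals $[n\tau,(n+1)\tau]$ yields a solution on $[0,T)$ for arbitrary $T>0$ that is uniformly compactly supported in position and velocity on every bounded time interval.

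Uniqueness is inherited step by step from the linear theory: on each interval the equation is a continuity equation driven by a locally Lipschitz velocity field (given by the previous step), and its measure-valued solution is uniquely characterized as the push-forward along characteristics. Alternatively, one can bound $d_1(f_t^1,f_t^2)$ for two putative solutions by transporting an optimal coupling of the initial data along the two characteristic flows and closing a Gronwall inequality, which anticipates the stability result of the next subsection. The principal technical difficulty, and the place where the delay genuinely helps, is maintaining a uniform lower bound on the denominator defining $F[f_{t-\tau}]$ over the compact sets reached by the characteristic flow; this requires propagating the support estimate through the iteration without blow-up, and it is precisely the velocity bound from the analogue of Lemma \ref{lem_bdd} that keeps the construction under control on every finite horizon.
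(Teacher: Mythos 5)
Your proposal is correct and follows essentially the same route as the paper: establish a local Lipschitz and $L^\infty$ bound on $F[\cdot]$ for compactly supported measures (the paper's Lemma \ref{lem:LipschF}), then use the method of steps to reduce each interval $[k\tau,(k+1)\tau]$ to a linear continuity equation solved by push-forward along characteristics, with the velocity bound of Lemma \ref{lem_bdd} propagating the compact support. The only cosmetic difference is that the paper delegates the local well-posedness and the equivalence of the push-forward with the weak formulation to \cite[Theorem 3.10]{CCR}, whereas you sketch that verification directly.
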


For the proof, we first derive a local Lipschitz and $L^\infty$ bound on the force field $F[f_t]$.
Let us introduce the zeroth and first order
moments of $f_t$,
\[
   \rho_{f_t} := \int_{\R^d} \d f_t(v) \quad \mbox{and} \quad \rho_{f_t} u_{f_t} := \int_{\R^d} v \,\d f_t (v).
\]
Note that $\rho_{f_t}$ is, for every fixed $t\geq 0$, a probability measure on $\R^d$.

\begin{lemma}\label{lem:LipschF}
Let $f_t \in \mc([0,T];\P_1(\R^{2d}))$ be uniformly compactly supported, i.e.,
\[
  \mbox{supp } f_t \subset B^{2d}(0,R) \qquad\mbox{for all } t \in [0,T],
\]
for some positive constant $R > 0$.

Then the force field $F[f_t](x,v)$ defined in \eqref{kin_mt} is locally Lipschitz continuous with respect to $x$ and $v$,
uniformly in time, i.e.,
there exists a constant $C > 0$ such that 
\[
|F[f_t](x,v) - F[f_t](\tilde x,\tilde v)| \leq C\bigl( |x-\tilde x| + |v - \tilde v|\bigr) \quad \mbox{for} \quad (x,v) \in B^{2d}(0,R),\; t\in [0,T].
\]
Moreover, there exists a constant $C > 0$ such that
\[
   |F[f_t](x,v)| \leq C \quad \mbox{for} \quad (x,v) \in B^{2d}(0,R),\; t\in [0,T].
\]
\end{lemma}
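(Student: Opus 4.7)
The plan is to exploit the compact support hypothesis to bound the denominator of $F[f_t]$ from below, and then treat $F[f_t]$ as a ratio of two simple integral functionals of $(x,v)$ whose regularity follows directly from the Lipschitz continuity of $\psi$ and the linearity in $v$.

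First I would establish the $L^\infty$ bound. For $(x,v)\in B^{2d}(0,R)$ and $(y,w)\in\mathrm{supp}\,f_t\subset B^{2d}(0,R)$ one has $|x-y|\leq 2R$ and $|w-v|\leq 2R$. Since $\psi$ is positive and nonincreasing with $\psi(0)=1$, this yields $\psi(2R)\leq\psi(|x-y|)\leq 1$. Writing
\[
   D_t(x) := \int_{\R^{2d}}\psi(|x-y|)\,\d f_t(y,w),\qquad N_t(x,v) := \int_{\R^{2d}}\psi(|x-y|)(w-v)\,\d f_t(y,w),
\]
the fact that $f_t$ is a probability measure supported in $B^{2d}(0,R)$ gives the two-sided estimate $\psi(2R)\leq D_t(x)\leq 1$ and the bound $|N_t(x,v)|\leq 2R$. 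Consequently $|F[f_t](x,v)|=|N_t(x,v)|/D_t(x)\leq 2R/\psi(2R)$, uniformly in $t\in[0,T]$.

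For the Lipschitz estimate I would use the algebraic identity
\[
   F[f_t](x,v)-F[f_t](\tilde x,\tilde v) = \frac{N_t(x,v)-N_t(\tilde x,\tilde v)}{D_t(x)} + N_t(\tilde x,\tilde v)\,\frac{D_t(\tilde x)-D_t(x)}{D_t(x)\,D_t(\tilde x)}.
\]
Denoting by $L_\psi$ the Lipschitz constant of $\psi$ and using $\bigl||x-y|-|\tilde x-y|\bigr|\leq|x-\tilde x|$, one has $|\psi(|x-y|)-\psi(|\tilde x-y|)|\leq L_\psi|x-\tilde x|$ on the support of $f_t$. Splitting
\[
   N_t(x,v)-N_t(\tilde x,\tilde v)=\int\bigl(\psi(|x-y|)-\psi(|\tilde x-y|)\bigr)(w-v)\,\d f_t + \int\psi(|\tilde x-y|)(\tilde v - v)\,\d f_t,
\]
and using $|w-v|\leq 2R$, $\psi\leq 1$, gives $|N_t(x,v)-N_t(\tilde x,\tilde v)|\leq 2RL_\psi|x-\tilde x|+|v-\tilde v|$. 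Similarly $|D_t(x)-D_t(\tilde x)|\leq L_\psi|x-\tilde x|$. Combining these with the lower bound $D_t\geq\psi(2R)$ and the numerator bound $|N_t|\leq 2R$ already established yields a constant $C$, depending only on $R$, $\psi(2R)$ and $L_\psi$, such that
\[
   |F[f_t](x,v)-F[f_t](\tilde x,\tilde v)|\leq C\bigl(|x-\tilde x|+|v-\tilde v|\bigr)
\]
for all $t\in[0,T]$ and $(x,v),(\tilde x,\tilde v)\in B^{2d}(0,R)$, completing the proof.

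The only potential pitfall is the denominator, which could in principle degenerate; here however the combination of compact support for $f_t$ with the positivity and monotonicity of $\psi$ makes the lower bound $\psi(2R)$ immediate, so the argument is essentially bookkeeping around this single a priori estimate.
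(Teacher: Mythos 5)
Your proposal is correct and follows essentially the same route as the paper: bound the denominator from below by $\psi(2R)$ using the compact support together with the positivity and monotonicity of $\psi$, bound the numerator by $O(R)$, and then apply the standard quotient decomposition with the Lipschitz continuity of $\psi$. The only cosmetic difference is that the paper writes $F[f_t](x,v)$ as a ratio of the moments $\psi\star(\rho_{f_t}u_{f_t})$ and $\psi\star\rho_{f_t}$ minus $v$, so the $v$-dependence is handled trivially outside the quotient, whereas you keep $v$ inside the numerator $N_t(x,v)$; both yield the same estimate.
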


\begin{proof}
Due to the assumed positivity of the influence function $\psi$ and the fact that $\rho_{f_t}$ is a probability measure on $\R^d$,
we have
\[
   0 <  \psi_* := \inf_{y \in B^d(0,2R)} \psi(|y|) \leq (\psi \star \rho_{f_t})(x)
   \qquad\mbox{for all } x\in\R^d,
\]
where the convolution operator $\star$ is defined as
\[
    (\psi \star \rho)(x) :=  \int_{\R^{d}} \psi(|x-y|) \d\rho(y).
\]
Moreover,
\[
   \lt|(\psi \star (\rho_{f_t} u_{f_t}))(x)\rt| = \lt|\int_{\R^{2d}} \psi(|x-y|) v\,\d f_t(y,v)\rt| \leq R\|\psi\|_{L^\infty}.
\]
This yields
$$\begin{aligned}
&|F[f_t](x,v) - F[f_t](\tilde x, \tilde v)| \cr
&\,\, \leq \lt|\frac{(\psi \star (\rho_{f_t} u_{f_t}))(x)}{(\psi \star \rho_{f_t})(x)} - \frac{(\psi \star (\rho_{f_t} u_{f_t}))(\tilde x)}{(\psi \star \rho_{f_t})(\tilde x)} \rt| + |v - \tilde v|\cr
&\,\, \leq \frac{2\max\{1, R\}\|\psi\|_{L^\infty}}{\psi_*^2}\lt(\lt|(\psi \star (\rho_{f_t} u_{f_t}))(x) - (\psi \star (\rho_{f_t} u_{f_t}))(\tilde x) \rt| + \lt|(\psi \star\rho_{f_t})(x) - (\psi \star \rho_{f_t})(\tilde x) \rt|\rt)\cr
&\quad + |v - \tilde v|\cr
&\,\, \leq \frac{2(R+1)\max\{1, R\}\|\psi\|_{L^\infty}\|\psi\|_{Lip}}{\psi_*^2}|x - \tilde x| + |v - \tilde v|,
\end{aligned}$$
where $\|\psi\|_{Lip}$ denotes the Lipschitz constant of $\psi$ and we used the estimates
\[
\lt|(\psi \star (\rho_{f_t} u_{f_t}))(x) - (\psi \star (\rho_{f_t} u_{f_t}))(\tilde x) \rt| \leq R\|\psi\|_{Lip}|x - \tilde x|
\]
and
\[
\lt|(\psi \star\rho_{f_t})(x) - (\psi \star \rho_{f_t})(\tilde x) \rt| \leq \|\psi\|_{Lip}|x - \tilde x|.
\]
Finally, we easily find that
\[
   |F[f_t](x,v)| \leq R\lt(\frac{\|\psi\|_{L^\infty}}{\psi_*} + 1\rt) \quad \mbox{for} \quad (x,v) \in B^{2d}(0,R),\; t\in [0,T].
\]
\end{proof}

We are now ready to prove the main Theorem of this Section.

\begin{proof}[Proof of Theorem \ref{main_thm2}]
An application of \cite[Theorem 3.10]{CCR} together with Lemma \ref{lem:LipschF}
directly implies the local-in-time existence and uniqueness of measure-valued solutions
to the system \eqref{kin_mt} in the sense of Definition \ref{def_weak}.
We notice that these local-in-time solutions exist as long as the solution is compactly supported in position and velocity.
Thus, to prove the global-in-time existence of solutions, we only need to estimate the growth of support of $f_t$ in both position and velocity.
Let us set 
\(   \label{RXRV}
   R_X[f_t] := \max_{x \,\in\, \overline{\mbox{\footnotesize supp}_x f_t}}|x|,  \qquad
   R_V[f_t] := \max_{v \,\in\, \overline{\mbox{\footnotesize supp}_v f_t}}|v|,
\)
   for $t \in [0,T]$, where supp$_x f_t$ and supp$_v f_t$ represent $x$- and $v$-projections of supp$f_t$, respectively. We also set
\(  \label{supp_diams}
   R^t_X := \max_{-\tau \leq s \leq t}R_X[f_s], \qquad  R^t_V := \max_{-\tau \leq s \leq t}R_V[f_s].
\)
We will construct the solution using the method of steps, see, e.g., \cite{Smith}.
We first consider the time interval $[0,\tau]$ and construct the system of characteristics
$Z(t;x,v) := (X(t;x,v),V(t;x,v)): [0,\tau] \times \R^d \times \R^d \to \R^d \times \R^d$
associated with \eqref{kin_mt},
\begin{align}\label{tra_1}
\begin{aligned}
    \tot{X(t;x,v)}{t} &= V(t;x,v),\\
    \tot{V(t;x,v)}{t} &= F[f_{t-\tau}]\lt(Z(t;x,v)\rt),
\end{aligned}
\end{align}
where we adopt the notation $f_{t-\tau}:\equiv g_{t-\tau}$ for $t \in [0,\tau]$.
The system \eqref{tra_1} is considered subject to the initial conditions
\(  \label{tra_1_IC}
   X(0;x,v) = x,\qquad V(0;x,v) = v,
\)
for all $(x,v)\in\R^{2d}$.
Then, by Lemma \ref{lem:LipschF}, we obtain the well-posedness of the characteristic system \eqref{tra_1}--\eqref{tra_1_IC} on the time interval $[0,\tau]$.
Note that the evolution of $V$ can be rewritten as
\[
  \frac{dV(t;x,v)}{dt} = \frac{\int_{\R^{2d}} \psi(|X(t;x,v) - y|) w\,df_{t - \tau}(y,w)}{\int_{\R^{2d}} \psi(|X(t;x,v) - y|)\,df_{t - \tau}(y,w)} - V(t;x,v).
\]
Using a similar argument as in the proof of Lemma \ref{lem_bdd}, we obtain
\[
   \frac{d|V(t)|}{dt} \leq R^{t-\tau}_V - |V(t)| \quad \mbox{for} \quad\mbox{for } t\in[0,\tau].
\]
This, together with the continuity argument from the proof of Lemma \ref{lem_bdd}, yields
\[
   R^t_V < R^0_V \quad \mbox{for} \quad t \geq -\tau \quad\mbox{for } t\in[0,\tau],
\]
which further implies $R^t_X \leq R^0_X + t R^0_V$ for $t\in[0, \tau]$.

Thus, on the time interval $[0, \tau]$ we can construct a solution $f_t$ of \eqref{kin_mt}
that is compactly supported in $x$ and $v$ according to the above estimates.
We then iterate the construction inductively on the time intervals $[k\tau, (k+1)\tau]$ for $k=1,2,\dots$, until we reach the final time $T$.
This provides a solution $f_t\in \mc([0,T];\P_1(\R^{2d}))$ that is uniformly compactly supported in $x$ and $v$ .
Finally, it can be easily proved that this solution can be expressed as a push-forward of the initial datum $f_0$ 
by the characteristic map $Z(t;\cdot,\cdot)$, i.e., $f_t = Z(t;\cdot,\cdot)\# f_0$,
and that this formulation is equivalent to the weak formulation \eqref{weak_for}, see, e.g., \cite{CCR}.
\end{proof}

%
\subsection{Stability estimates for the kinetic equation \eqref{kin_mt}}
In this section we derive the stability estimate for the measure-valued solutions of the system \eqref{kin_mt} constructed in Theorem \ref{main_thm2}.

\begin{theorem}\label{main_thm3}
Let $f^i_t \in \mc([0, T];\P_1(\R^{2d}))$, $i=1,2$, be two weak solutions of \eqref{kin_mt} on the time interval $[0, T]$,
subject to the compactly supported initial data $g^i_s\in \mc([-\tau, 0];\P_1(\R^{2d}))$, as constructed in Theorem \ref{main_thm2}.
Then there exists a constant $C=C(T)$ such that
\(  \label{d-estimate}
   d_1(f^1_t,f^2_t) \leq C \max_{s\in[-\tau,0]} d_1(g^1_s,g^2_s) \quad \mbox{for} \quad t \in [0,T].
\)
\end{theorem}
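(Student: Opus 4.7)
The plan is to exploit the push-forward representation $f^i_t = Z^i(t;\cdot,\cdot)\# g^i_0$ established in Theorem \ref{main_thm2} and propagate stability through the characteristic flow. First, I fix an optimal transport plan $\gamma^*_0 \in \Gamma(g^1_0, g^2_0)$, so that $d_1(g^1_0, g^2_0) = \int_{\R^{4d}} |z_1 - z_2|\,\d\gamma^*_0(z_1, z_2)$. For each $t \in [0, T]$, the push-forward measure $(Z^1(t;\cdot,\cdot), Z^2(t;\cdot,\cdot))\# \gamma^*_0$ is an admissible coupling between $f^1_t$ and $f^2_t$, so the quantity
\[
Q(t) := \int_{\R^{4d}} |Z^1(t;z_1) - Z^2(t;z_2)|\,\d\gamma^*_0(z_1, z_2)
\]
provides the upper bound $d_1(f^1_t, f^2_t) \leq Q(t)$ for all $t \in [0, T]$. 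The key step is to derive a closed delay differential inequality for $Q$.

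Differentiating $Q$ and using the characteristic ODEs \eqref{tra_1} (with the convention $f^i_s \equiv g^i_s$ for $s \in [-\tau, 0]$), one obtains
\[
\tot{}{t} Q(t) \leq \int_{\R^{4d}} \Bigl(|V^1(t;z_1) - V^2(t;z_2)| + |F[f^1_{t-\tau}](Z^1(t;z_1)) - F[f^2_{t-\tau}](Z^2(t;z_2))|\Bigr)\,\d\gamma^*_0.
\]
The velocity-component contribution is bounded directly by $Q(t)$. For the force-field difference, I split
\[
|F[f^1_{t-\tau}](Z^1) - F[f^2_{t-\tau}](Z^1)| + |F[f^2_{t-\tau}](Z^1) - F[f^2_{t-\tau}](Z^2)|,
\]
the second term being controlled by $C|Z^1 - Z^2|$ via Lemma \ref{lem:LipschF}. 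The first term requires an auxiliary measure-Lipschitz estimate: for uniformly compactly supported $f^1, f^2$,
\[
|F[f^1](x,v) - F[f^2](x,v)| \leq C\, d_1(f^1, f^2),
\]
with a constant depending only on the common support radius and on $\|\psi\|_{L^\infty}$, $\|\psi\|_{Lip}$, and $\psi_*$. This measure-Lipschitz bound for the nonlinear quotient $F[\cdot]$ is the principal technical obstacle; I would prove it by applying the bounded-Lipschitz dual formulation of $d_1$ separately to $(\psi \star \rho_{f^i} u_{f^i})(x)$ and $(\psi \star \rho_{f^i})(x)$, then combining via the elementary identity $\frac{A^1}{B^1} - \frac{A^2}{B^2} = \frac{(A^1 - A^2)B^2 - A^2(B^1 - B^2)}{B^1 B^2}$ and using the uniform lower bound $\psi_* > 0$ on the denominators, very much in the spirit of Lemma \ref{lem:LipschF}.

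Assembling these estimates yields a delay differential inequality of the form
\[
\tot{}{t} Q(t) \leq C_1 Q(t) + C_2\, d_1(f^1_{t-\tau}, f^2_{t-\tau}), \qquad t \in [0,T],
\]
with $Q(0) \leq d_1(g^1_0, g^2_0)$. To conclude I invoke the method of steps. Denote $D_0 := \max_{s \in [-\tau,0]} d_1(g^1_s, g^2_s)$. On $[0,\tau]$ the delayed term equals $d_1(g^1_{t-\tau}, g^2_{t-\tau}) \leq D_0$, and a direct Gronwall argument gives $Q(t) \leq C_\tau D_0$ on $[0,\tau]$. Iterating this bound on each subsequent interval $[k\tau, (k+1)\tau]$ for $k = 1, 2, \ldots, \lceil T/\tau \rceil$ yields $Q(t) \leq C(T) D_0$ on all of $[0,T]$, and since $d_1(f^1_t, f^2_t) \leq Q(t)$ this is precisely the claimed estimate \eqref{d-estimate}.
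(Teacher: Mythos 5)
Your proposal is correct, and its overall skeleton (characteristic flows, coupling of the initial data, a delay differential inequality for the coupling cost, closure by the method of steps) coincides with the paper's. The genuine differences lie in the execution of the two central steps. First, you work with an optimal transport \emph{plan} $\gamma^*_0$ and the coupling $(Z^1,Z^2)\#\gamma^*_0$, whereas the paper uses an optimal transport \emph{map} $\ms^0$ and the composed map $\mt^t = Z^2(t)\circ\ms^0\circ Z^1(t)^{-1}$; your formulation is technically more robust, since optimal maps between arbitrary probability measures need not exist (a relevant point given that the theorem is applied to couple empirical measures with continuous ones in Remark \ref{rem:mean_field}). Second, for the force term you split $F[f^1_{t-\tau}](Z^1)-F[f^2_{t-\tau}](Z^2)$ into a ``same point, different measures'' part, controlled by a measure-Lipschitz estimate $|F[f^1](x,v)-F[f^2](x,v)|\leq C\,d_1(f^1,f^2)$ proved via the Kantorovich--Rubinstein dual formulation, and a ``same measure, different points'' part controlled by Lemma \ref{lem:LipschF}. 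The paper instead estimates the full difference in one go, transporting $f^2_{t-\tau}$ back to $f^1_{t-\tau}$ through the optimal map $\mt^{t-\tau}$ and bounding term by term, which makes the quantity $u(t-\tau)$ appear directly. Your route is more modular and reuses Lemma \ref{lem:LipschF} as a black box, at the price of having to verify that the Lipschitz constants of the test functions $\psi(|x-\cdot|)(\cdot-v)$ are uniform over the compact supports (they are, by Theorem \ref{main_thm2}); the paper's route avoids stating the auxiliary lemma but essentially reproves it inline. Both yield the same delay inequality and the same stepwise Gronwall conclusion.
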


\begin{proof}
Adopting again the notation $f_{t-\tau}:\equiv g_{t-\tau}$ for $t \in [0,\tau]$,
we again construct the system of characteristics $Z^i(t;x,v) := (X^i(t;x,v),V^i(t;x,v)): [0,T] \times \R^d \times \R^d \to \R^d \times \R^d$,
\begin{align*}
\begin{aligned}
    \tot{X^i(t;x,v)}{t} &= V^i(t;x,v),\\
    \tot{V^i(t;x,v)}{t} &= F[f^i_{t-\tau}]\lt(Z^i(t;x,v)\rt),
\end{aligned}
\end{align*}
subject to the initial condition
\[
   X^i(0;x,v) = x,\qquad V^i(0;x,v) = v,
\]
for all $(x,v)\in\R^{2d}$.
Since by Theorem \ref{main_thm2} the measures $f^i_t$ have
uniformly compact supports in phase space on $[0,T]$,
the flows $Z^i$, $i=1, 2$, are well defined on this time interval.
Then it is easy to check that $f^i_t = Z^i(t;s,\cdot,\cdot) \# f^i_{s}$ for any $t, s \in [0,T]$; see, e.g., \cite{CCR}.
Morever, similarly as in \eqref{supp_diams}, let us denote by $R^T_{i;X}$ and, resp., $R^T_{i;V}$ the support diameters
\[
    R^T_{i;X} := \max_{-\tau \leq s \leq t} R_X[f_i^s],\qquad  R^T_{i;V} := \max_{-\tau \leq s \leq t} R_V[f_i^s],
\]
where $R_X$ and $R_V$ are defined in \eqref{RXRV}.

We choose an \emph{optimal} transport map $\ms^0(x,v) = (\ms_x^0(x,v), \ms_v^0(x,v))$
between the probability measures $f^1_{0}$ and $f^2_{0}$ with respect to the distance $d_1$,
i.e., $f^2_{0} = \ms^s \# f^1_{0}$ and
\[
   d_1(f^1_{0},f^2_{0}) = \int_{\R^{2d}} \left| (x,v) - \ms^0(x,v) \right| \,\d f^1_{0}(x,v).
\]
Moreover, defining $\mt^t := Z^2(t; \cdot, \cdot) \circ \ms^0 \circ Z^1(t;\cdot,\cdot)^{-1}$ for $t \in [0,T]$,
we have $\mt^t \# f^1_t = f^2_t$, and
\[
   d_1(f^1_t,f^2_t) \leq \int_{\R^{2d}} \left|(x,v) - \mt^t(x,v))\right| \,\d f^1_{t}(x,v).
\]
Therefore, defining for $t\in [0,T]$,
$$\begin{aligned}
   u(t) &:= \int_{\R^{2d}} \left|(x,v) - \mt^t(x,v))\right| \,\d f^1_{t}(x,v) \\
                  &= \int_{\R^{2d}} \left|Z^1(t;x,v) - Z^2(t;\ms^0(x,v))\right| \,\d f^1_{0}(x,v),
\end{aligned}$$
where we used the identity $\mt^t\circ Z^1(t;\cdot,\cdot) = Z^2(t;\cdot,\cdot)\circ\ms^0$,
we have $d_1(f^1_t,f^2_t) \leq u(t)$ for all $t\in[0,T]$.
We extend the definition of $\mt^t$ for $t\in[-\tau,0)$ to be an \emph{optimal}
transport map between the data $g^1_{t}$ and $g^2_{t}$. Note that $\mt^0\equiv \ms^0$.
We also extend the definition of $u(t)$,
\[
   u(t) := d_1(g^1_t,g^2_t) = \int_{\R^{2d}} \left| (x,v) - \mt^t(x,v) \right| \,\d g^1_{t}(x,v)
   \qquad\mbox{for } t\in[-\tau,0).
\]
We have
$$\begin{aligned}
   \tot{}{t} u(t) &\leq \int_{\R^{2d}} |V^1(t;x,v) - V^2(t;\ms^0(x,v))|\,\d f^1_{0}(x,v) \\
   &\quad + \int_{\R^{2d}} |F[f^1_{t-\tau}](Z^1(t;x,v)) - F[f^2_{t-\tau}](Z^2(t;\ms^0(x,v)))| \,\d f^1_{0}(x,v).
\end{aligned}$$
The first term of the right-hand side is, by definition, estimated from above by $u(t)$.
The second term is rewritten as
\[
   J := \int_{\R^{2d}} |F[f^1_{t-\tau}](x,v) - F[f^2_{t-\tau}](\mt^t(x,v))| \,\d f^1_{t}(x,v)
\]
and by definition of $F[\cdot]$ we have
$$\begin{aligned}
   & |F[f^1_{t-\tau}](x,v) - F[f^2_{t-\tau}](\mt^t(x,v))| \\
   &\qquad = \left| 
     \frac{\int_{\R^{2d}} \psi(|x-y|)(w-v) \d f^1_{t-\tau}(y,w)}{\int_{\R^{2d}} \psi(|x-y|)\d f^1_{t-\tau}(y,w)}
      - \frac{\int_{\R^{2d}} \psi(|\mt^t_x-y|)(w-\mt^t_v) \d f^2_{t-\tau}(y,w)}{\int_{\R^{2d}} \psi(|\mt^t_x-y|)\d f^2_{t-\tau}(y,w)}
      \right|,
\end{aligned}$$
where we introduced the shorthand notation $\mt^t_x:=\mt^t_x(x,v)$ and similarly for $\mt^t_v$.
Note that due to the monotonicity of $\psi$, we have the lower bound for the first denominator
\[
   \int_{\R^{2d}} \psi(|x-y|)\d f^1_{t-\tau}(y,w) \geq \psi(R^T_{1;X}) > 0 \quad\mbox{for all } x\in\R^d,
\]
and analogously for the other denominator.
Consequently,
$$\begin{aligned}
   & |F[f^1_{t-\tau}](x,v) - F[f^2_{t-\tau}](\mt^t(x,v))| \\
   &\qquad \leq \frac{1}{\psi(R^T_{1;X})} \left| \int_{\R^{2d}} \psi(|x-y|)(w-v) \d f^1_{t-\tau}(y,w) - \int_{\R^{2d}} \psi(|\mt^t_x-y|)(w-\mt^t_v) \d f^2_{t-\tau}(y,w) \right|  \\
   &\qquad\qquad + \frac{1}{\psi(R^T_{1;X}) \psi(R^T_{2;X})} \left| \int_{\R^{2d}} \psi(|\mt^t_x-y|)(w-\mt^t_v) \d f^2_{t-\tau}(y,w) \right| \\
   &\qquad\qquad\qquad \times  \left| \int_{\R^{2d}} \psi(|x-y|) \d f^1_{t-\tau}(y,w) - \int_{\R^{2d}} \psi(|\mt^t_x-y|) \d f^2_{t-\tau}(y,w) \right|.
\end{aligned}$$
The expression on the last line is estimated by
$$\begin{aligned}
   & \left| \int_{\R^{2d}} \psi(|x-y|) \d f^1_{t-\tau}(y,w) - \int_{\R^{2d}} \psi(|\mt^t_x-y|) \d f^2_{t-\tau}(y,w) \right| \\
     &\qquad \leq \int_{\R^{2d}} \left| \psi(|x-y|) - \psi(|\mt^t_x-\mt^{t-\tau}_x(y,w)|)  \right|  \d f^1_{t-\tau}(y,w)  \\
     &\qquad \leq \|\psi\|_{Lip} \left( \left| x - \mt^t_x \right| + \int_{\R^{2d}} \left| y - \mt^{t-\tau}_x(y,w) \right|  \d f^1_{t-\tau}(y,w) \right) \\
     &\qquad \leq \|\psi\|_{Lip} \left( \left| x - \mt^t_x \right| + u(t-\tau) \right).     
\end{aligned}$$
Similarly, we have
$$\begin{aligned}
   & \left| \int_{\R^{2d}} \psi(|x-y|)(w-v) \d f^1_{t-\tau}(y,w) - \int_{\R^{2d}} \psi(|\mt^t_x-y|)(w-\mt^t_v) \d f^2_{t-\tau}(y,w) \right| \\
     &\qquad \leq \int_{\R^{2d}} \left| \psi(|x-y|)(w-v) - \psi(|\mt^t_x-\mt^{t-\tau}_x(y,w)|)(\mt^{t-\tau}_v(y,w)-\mt^t_v)  \right|  \d f^1_{t-\tau}(y,w)\\
     &\qquad \leq \int_{\R^{2d}} \left| \psi(|x-y|) - \psi(|\mt^t_x-\mt^{t-\tau}_x(y,w)|) \right| |v-w|  \d f^1_{t-\tau}(y,w)\\
     &\qquad\qquad + \int_{\R^{2d}} \psi(|\mt^t_x-\mt^{t-\tau}_x(y,w)|)  \left|v-w - (\mt^{t-\tau}_v(y,w)-\mt^t_v)\right|  \d f^1_{t-\tau}(y,w).
\end{aligned}$$
The first term of the right-hand side is estimated by
$$\begin{aligned}
&\int_{\R^{2d}} \left| \psi(|x-y|) - \psi(|\mt^t_x-\mt^{t-\tau}_x(y,w)|) \right| |v-w|  \d f^1_{t-\tau}(y,w)\cr
&\qquad    \leq (|v| + R_v^1) \|\psi\|_{Lip} \bigl( \left| x - \mt^t_x \right| + u(t-\tau) \bigr),
\end{aligned}$$
where we used the uniform boundedness of the velocity support of $f^1_t$ and similar steps as above.
For the second term we have
$$\begin{aligned}
     &\int_{\R^{2d}} \psi(|\mt^t_x-\mt^{t-\tau}_x(y,w)|) \left|v-w - (\mt^{t-\tau}_v(y,w)-\mt^t_v)\right|  \d f^1_{t-\tau}(y,w) \\
     &\qquad \leq \|\psi\|_{L^\infty} \int_{\R^{2d}} \left( |v-\mt^t_v| + |w - \mt^{t-\tau}_v(y,w)| \right)  \d f^1_{t-\tau}(y,w) \\
     &\qquad \leq \|\psi\|_{L^\infty} \bigl(  |v-\mt^t_v| + u(t-\tau)  \bigr).
\end{aligned}$$
Finally, we have
\[
   \left| \int_{\R^{2d}} \psi(|\mt^t_x-y|)(w-\mt^t_v) \d f^2_{t-\tau}(y,w) \right|
     \leq \|\psi\|_{L^\infty}( R^T_{2;V} + |\mt^t_v| ).
\]
Putting the above estimates together, we arrive at
\[
   J &=& \int_{\R^{2d}} |F[f^1_{t-\tau}](x,v) - F[f^2_{t-\tau}](\mt^t(x,v)| \,\d f^1_{t}(x,v) \\
     &\leq& C \int_{\R^{2d}} ( |x - \mt^t_x| + |v-\mt^t_v| + u(t-\tau) ) \,\d f^1_{t}(x,v) \\
     &\leq& C ( u(t) + u(t-\tau) ),
\]
where the constant $C$ depends only on $\|\psi\|_{L^\infty}$, $\|\psi\|_{Lip}$
and the support diameters $R^T_{i;X}$, $R^T_{i;V}$, $i=1,2$.

Finally, we arrive at
\[
   \tot{}{t} u(t) \leq C ( u(t) + u(t-\tau) )
\]
for all $t \in [0,T]$.
To conclude \eqref{d-estimate}, we denote $$\bar u:=\max_{s\in[-\tau,0]} u(s) = \max_{s\in[-\tau,0]} d_1(g^1_s,g^2_s),$$
and for $w(t):=e^{-Ct} u(t)$, we calculate
\[
   \tot{}{t} w(t) \leq C e^{-C\tau} w(t-\tau).
\]
A simple induction argument yields then
\[
    w(t) \leq \bar u \left(1+C\tau e^{-C\tau}\right)^k \qquad \mbox{for } t\in((k-1)\tau, k\tau].
\]
Consequently,
\[
    u(t) \leq \bar u e^{Ct} \left(1+C\tau e^{-C\tau}\right)^k \qquad \mbox{for } t\in((k-1)\tau, k\tau],
\]
which can be further roughly estimated by
\[
    u(t) \leq \bar u e^{2Ct}\qquad\mbox{for } t\geq 0.
\]
We conclude \eqref{d-estimate} by recalling that $d_1(f^1_t,f^2_t) \leq u(t)$ for all $t\in[0,T]$.
\end{proof}

\begin{remark}\label{rem:mean_field}
The stability result of Theorem \ref{main_thm3} can be used for carrying out a rigorous passage
to the mean field limit in the discrete system \eqref{main_eq}--\eqref{IC0}.
Indeed, let us fix an initial datum $g_s\in \mc([-\tau, 0];\P_1(\R^{2d}))$,
compactly supported in position and velocity, i.e., satisfying \eqref{kinetic_supp} with some $R>0$.
Let $\{g^N_s\}_{N\in\N}$ be a family $N$-particle approximations of $g_s$, i.e.,
\[
   g^N_s := \sum_{i=1}^N  \delta(x-x^0_i(s))\otimes \delta(v-v^0_i(s)) \qquad\mbox{for } s\in[-\tau,0],
\]
where the $x_i^0, v_i^0 \in \mc([-\tau,0];\R^d)$ are chosen such that
\[
   \max_{s\in[-\tau,0]} d_1(g^N_s,g_s) \to 0 \quad\mbox{as}\quad N\to\infty.
\]
Denoting then $(x^N_i,v^N_i)$ the solution of the discrete $N$-particle system \eqref{main_eq}--\eqref{IC0}
subject to the initial datum $(x_i^0,v_i^0)_{i=1,\dots,N}$, and the corresponding empirical measure
\[
   f^N_t := \sum_{i=1}^N  \delta(x-x^N_i(t))\otimes \delta(v-v^N_i(t)) \qquad\mbox{for } t\in[0,T),
\]
then it is easily checked that $f^N_t$ is a measure valued solution of the kinetic system \eqref{kin_mt}
in the sense of Definition \ref{def_weak}.
Moreover, if $f_t\in \mc([0,T);\P_1(\R^{2d}))$ is a measure valued solution of \eqref{kin_mt}
subject to the datum $g_s$, as constructed in Theorem \ref{main_thm2},
then by Theorem \ref{main_thm3} we have the stability estimate
\[
   d_1(f_t,f^N_t) \leq C \max_{t\in[-\tau,0]} d_1(g_s,g^N_s) \quad \mbox{for} \quad t \in [0,T),
\]
where the constant $C$ depends only on the influence function $\psi$, $R$ and $T$
(in particular, it is independent of $N$).
Consequently, $f^N_t$ is an approximation of $f_t$, i.e., $f^N_t \to f_t$ in $d_1$,
uniformly on $[0,T)$, as $N\to\infty$.
\end{remark}

%
%
%
%
\subsection{Asymptotic flocking in the kinetic equation \eqref{kin_mt}}
In this part we present a sufficient condition for asymptotic flocking in the kinetic system \eqref{kin_mt}.
Let us note that this is a very natural extension of Theorem \ref{thm_main},
combined with the stability result of Theorem \ref{main_thm3}.

In analogy to \eqref{dXdV}, we define the position- and velocity diameters for a compactly supported measure $g\in\P_1(\R^{2d})$,
\[
   d_X[g] := \mbox{diam}\lt({\mbox{supp}_x g}\rt), \qquad  d_V[g] := \mbox{diam}\lt({\mbox{supp}_v g}\rt),
\]
where supp$_x f$ denotes the $x$-projection of supp$f$ and similarly for supp$_v f$.

\begin{theorem}\label{main_thm4}
Let $f_t \in \mc([0, T);\P_1(\R^{2d}))$ be a weak solution of \eqref{kin_mt} on the time interval $[0, T)$,
subject to a compactly supported initial datum $g_s\in \mc([-\tau, 0];\P_1(\R^{2d}))$, as constructed in Theorem \ref{main_thm2}.
Moreover, assume that
\( \label{kin_flocking_cond}
   d_V[g_0]  + \int_{-\tau}^0 d_V[g_s]\,\d s < \int_{d_X[g_{-\tau}] + R_V^0 \tau}^\infty \psi(s)\,\d s.
\)
Then the weak solution $f_t$ satisfies
\(  \label{kin_flocking}
   d_V[f_t] \leq \left( \max_{s\in[-\tau,0]} d_V[g_s] \right) e^{-Ct} \quad \mbox{for } t \geq 0,
   \qquad \sup_{t\geq 0} d_X[f_t] < \infty,
\)
where $C$ is a positive constant independent of $t$.
\end{theorem}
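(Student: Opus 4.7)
The plan is to derive the result by combining the discrete flocking estimate of Theorem \ref{thm_main} with the mean-field approximation procedure outlined in Remark \ref{rem:mean_field} and the stability estimate of Theorem \ref{main_thm3}. The crucial point is that the exponential decay rate $C$ in Theorem \ref{thm_main} depends only on $\psi$, $\tau$, and the initial diameters appearing in \eqref{ass1}, not on the number of particles $N$; this uniformity is precisely what enables the passage to the kinetic limit.

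First I would construct a sequence of empirical measures
\[
   g^N_s = \sum_{i=1}^N \delta(x-x^{0,N}_i(s))\otimes\delta(v-v^{0,N}_i(s)),\quad s\in[-\tau,0],
\]
approximating $g_s$ in $d_1$ uniformly in $s$, and chosen so that the discrete diameters computed from the particle positions and velocities converge to $d_X[g_s]$ and $d_V[g_s]$, and the discrete maximum speed to $R_V^0$, uniformly in $s\in[-\tau,0]$. This can be arranged by including particles that approximately realize the extreme points of $\mbox{supp}_x g_s$ and $\mbox{supp}_v g_s$ alongside a standard quasi-uniform sampling. Because \eqref{kin_flocking_cond} is a strict inequality and $\psi$ is continuous, for $N$ large enough the discrete flocking condition \eqref{ass1} holds for $(x^{0,N}_i, v^{0,N}_i)$, with a corresponding constant $d_*^N$ (bounded in $N$) obtained as in the proof of Theorem \ref{thm_main}. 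Applying that theorem then yields a global-in-time discrete solution satisfying
\[
   d_V^N(t) \leq \left(\max_{s\in[-\tau,0]} d_V^N(s)\right) e^{-C t}, \qquad \sup_{t\geq 0} d_X^N(t) \leq d_*^N,
\]
where $C>0$ is independent of $N$ (for $N$ sufficiently large).

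For any fixed $T>0$, the empirical measures $f^N_t$ associated with these discrete solutions are weak solutions of \eqref{kin_mt}, and Theorem \ref{main_thm3} yields $d_1(f^N_t, f_t) \to 0$ uniformly on $[0,T]$. In particular, $f^N_t \to f_t$ weakly with uniformly bounded supports. Since the diameter of the support is lower semi-continuous under weak convergence of compactly supported measures, i.e., $d_V[f_t] \leq \liminf_N d_V[f^N_t]$ (and similarly for $d_X$) --- which follows from the Portmanteau theorem applied to open neighborhoods of any two points of $\mbox{supp} f_t$ --- we obtain
\[
   d_V[f_t] \leq \left(\max_{s\in[-\tau,0]} d_V[g_s]\right) e^{-Ct}, \qquad d_X[f_t] \leq \limsup_{N\to\infty} d_*^N < \infty,
\]
for all $t\in[0,T]$; since $T$ was arbitrary, \eqref{kin_flocking} follows.

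The main technical obstacle is the construction step: ensuring that the empirical approximations $g^N_s$ are chosen so that the discrete diameters, the maximum speed, and hence the constant $d_*^N$ converge to their continuous counterparts uniformly in $s\in[-\tau,0]$, so that the strict continuous condition \eqref{kin_flocking_cond} transfers to the discrete condition \eqref{ass1} with a uniformly positive exponent $C$. The $d_1$-continuity of $s\mapsto g_s$ combined with the compactness assumption \eqref{kinetic_supp} controls how the supports move with $s$, while the strict inequality in \eqref{kin_flocking_cond} and the continuity of $\psi$ leave enough room to absorb the finite-$N$ approximation errors.
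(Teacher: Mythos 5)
Your proposal follows essentially the same route as the paper's proof: approximate $g_s$ by empirical measures satisfying the discrete condition \eqref{ass1} for large $N$ (using the strictness of \eqref{kin_flocking_cond}), apply Theorem \ref{thm_main} with its $N$-independent decay rate, and pass to the limit via the stability estimate of Theorem \ref{main_thm3}. Your use of lower semicontinuity of the support diameter under weak convergence is in fact a slightly more careful justification of the final limit step than the paper's bare assertion that $d_V[f_t]=d_V(t)$, but it is a refinement of the same argument rather than a different approach.
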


\begin{proof}
Similarly as in Remark \ref{rem:mean_field}, we construct
$\{g^N_s\}_{N\in\N}$ a family of $N$-particle approximations of $g_s$, i.e.,
\[
   g^N_s := \sum_{i=1}^N  \delta(x-x^0_i(s))\otimes \delta(v-v^0_i(s)) \qquad\mbox{for } s\in[-\tau,0],
\]
where the $x_i^0, v_i^0 \in \mc([-\tau,0];\R^d)$ are chosen such that
\[
   \max_{s\in[-\tau,0]} d_1(g^N_s,g_s) \to 0 \quad\mbox{as}\quad N\to\infty.
\]
Due to the assumption \eqref{kin_flocking_cond}, we can choose $x_i^0, v_i^0$
such that the discrete flocking condition \eqref{ass1} is uniformly satisfied for all $N\in\N$.
Denoting then $(x^N_i,v^N_i)$ the solution of the discrete $N$-particle system \eqref{main_eq}--\eqref{IC0}
subject to the initial datum $(x_i^0,v_i^0)_{i=1,\dots,N}$, Theorem \ref{thm_main}
provides a positive constant $C_1>0$ such that
\[
   d_V(t) \leq \left( \max_{s\in[-\tau,0]} d_V(s) \right) e^{-C_1 t} \quad \mbox{for } t \geq 0,
\]
with the diameters $d_V$, $d_X$ defined in \eqref{dXdV}.
The constant $C_1>0$ is independent of $t$ and $N$.
The empirical measure
\[
   f^N_t := \sum_{i=1}^N  \delta(x-x^N_i(t))\otimes \delta(v-v^N_i(t))
\]
is a measure valued solution of the kinetic equation \eqref{kin_mt}
in the sense of Definition \ref{def_weak}.
For any fixed $T>0$, Theorem \ref{main_thm3} provides the stability estimate
\[
   d_1(f_t,f^N_t) \leq C_2 \max_{s\in[-\tau,0]} d_1(g_s,g^N_s) \quad \mbox{for} \quad t \in [0,T),
\]
where the constant $C_2>0$ is independent of $N$.
Thus, fixing $T>0$ and letting $N\to\infty$ implies $d_V[f_t] = d_V(t)$ on $[0,T)$, and, consequently,
\[
   d_V[f_t] \leq \left( \max_{s\in[-\tau,0]} d_V[g_s] \right) e^{-C_1 t} \quad \mbox{for} \quad t \in[0,T).
\]
Since $T$ can be chosen arbitrarily and $C_1$ is independent of time, we conclude \eqref{kin_flocking}.
The finiteness of $\sup_{t\geq 0} d_X[f_t]$ is a direct consequence of the above.
\end{proof}

%
%
%
%
\section{Numerical experiments}\label{sec_nur}
In this section we present several numerical experiments for the particle system \eqref{main_eq}--\eqref{IC0}.
We use the standard explicit Euler scheme for the discretization in time and the method of steps
to treat the delayed terms, see, e.g., \cite{Smith}. We are interested in possible oscillatory behavior
of the particle velocities and their long-time behavior.

\subsection{Two particles}
We first consider the case of two particles, $N=2$, with positions $x_1(t), x_2(t)$ and velocities $v_1(t), v_2(t)$.
Then $\phi_{12} = \phi_{21} = 1$, thus the velocity equations in \eqref{main_eq} decouple from the positions and they satisfy
\begin{align}\label{2part}
\begin{aligned}
\frac{d v_1(t)}{dt} &= v_2(t - \tau) - v_1(t), \\
\frac{d v_2(t)}{dt} &= v_1(t - \tau) - v_2(t).
\end{aligned}
\end{align}
Defining $u:=v_1+v_2$, $w:=v_1-v_2$, we have
\(
  \frac{d u(t)}{dt} &=& u(t - \tau) - u(t), \label{uEq} \\
  \frac{d w(t)}{dt} &=& -w(t - \tau) - w(t). \label{wEq}
\)
The first equation admits, for a constant initial datum, a constant solution,
which corresponds to momentum conservation of the two-particle system.
Assuming a solution of the form $w(t) = e^{\lambda t}$ for the second equation, for some $\lambda\in\mathbb{C}$,
we obtain the characteristic equation
\[
   \lambda = -e^{-\lambda\tau} - 1.
\]
Writing $\lambda = \mu + i\sigma$ with $\mu$, $\sigma\in\R$, the real part
of the characteristic equation reads
\[
   \mu + 1 = - e^{-\mu\tau} \cos(\sigma\tau).
\]
This immediately implies that $\mu \leq 0$, and from the asymptotic stability theory
of delayed differential equations, see, e.g. \cite{Smith, Halanay}, it follows
that all solutions $w(t)$ of \eqref{wEq} either tend to zero as $t\to\infty$
or stay uniformly bounded.
Using the Lyapunov function method developed in \cite{EHS}, it can be shown that
$w$ asymptotically converges to zero as $t\to\infty$ whenever $\tau < 1/\sqrt{2}$;
however, this sufficient condition seems not to be optimal.
Moreover, the numerical experiments presented in Fig. \ref{fig:1}
suggest that for small delays ($\tau=0.25$, first row in Fig. \ref{fig:1}) the solution typically tends to zero
monotonically as $t\to\infty$, while oscillations appear 
for larger delays ($\tau=1$, second row in Fig. \ref{fig:1})

\begin{figure}[ht] \centering
 \includegraphics[width=.49\textwidth]{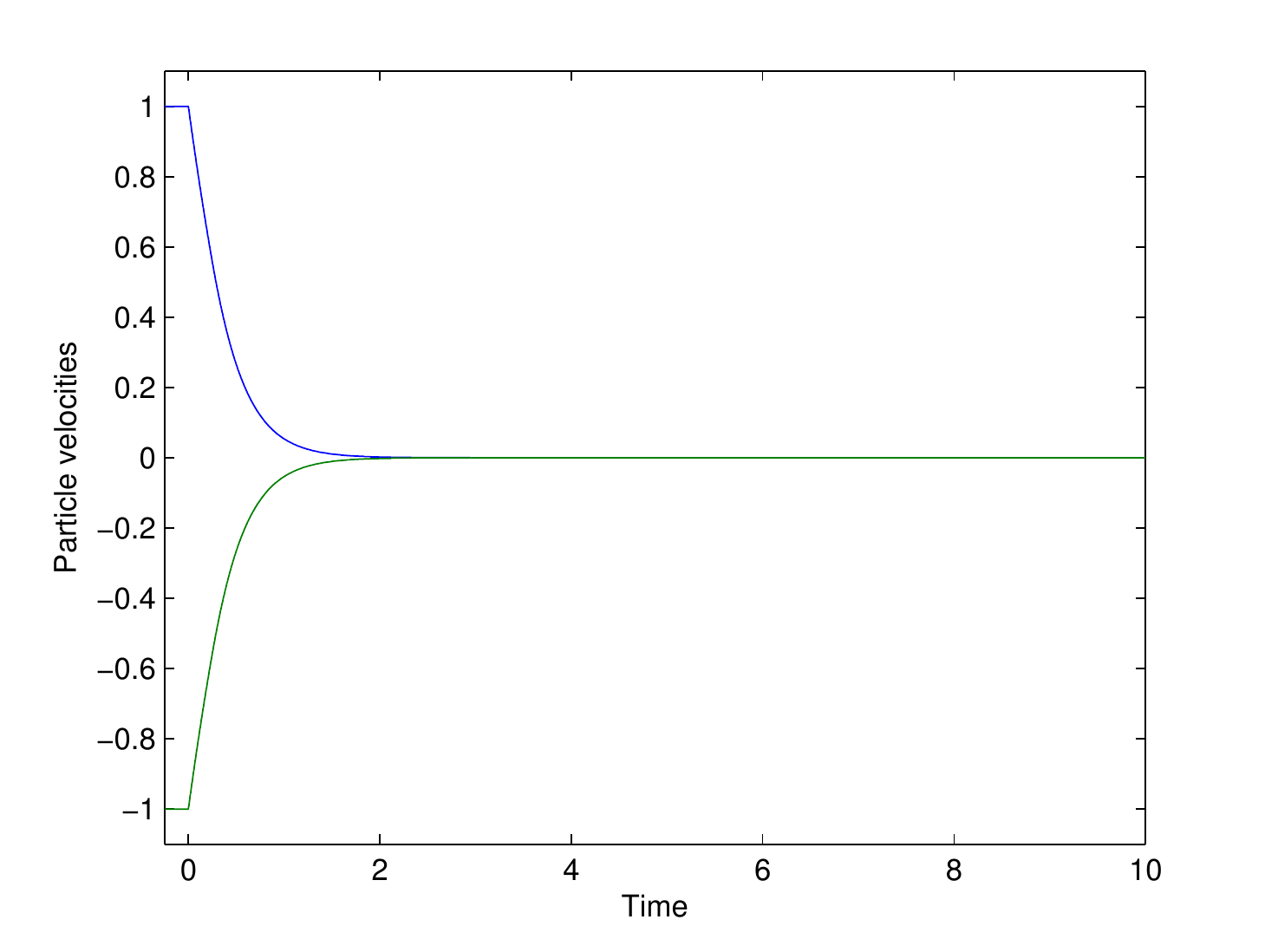}
 \includegraphics[width=.49\textwidth]{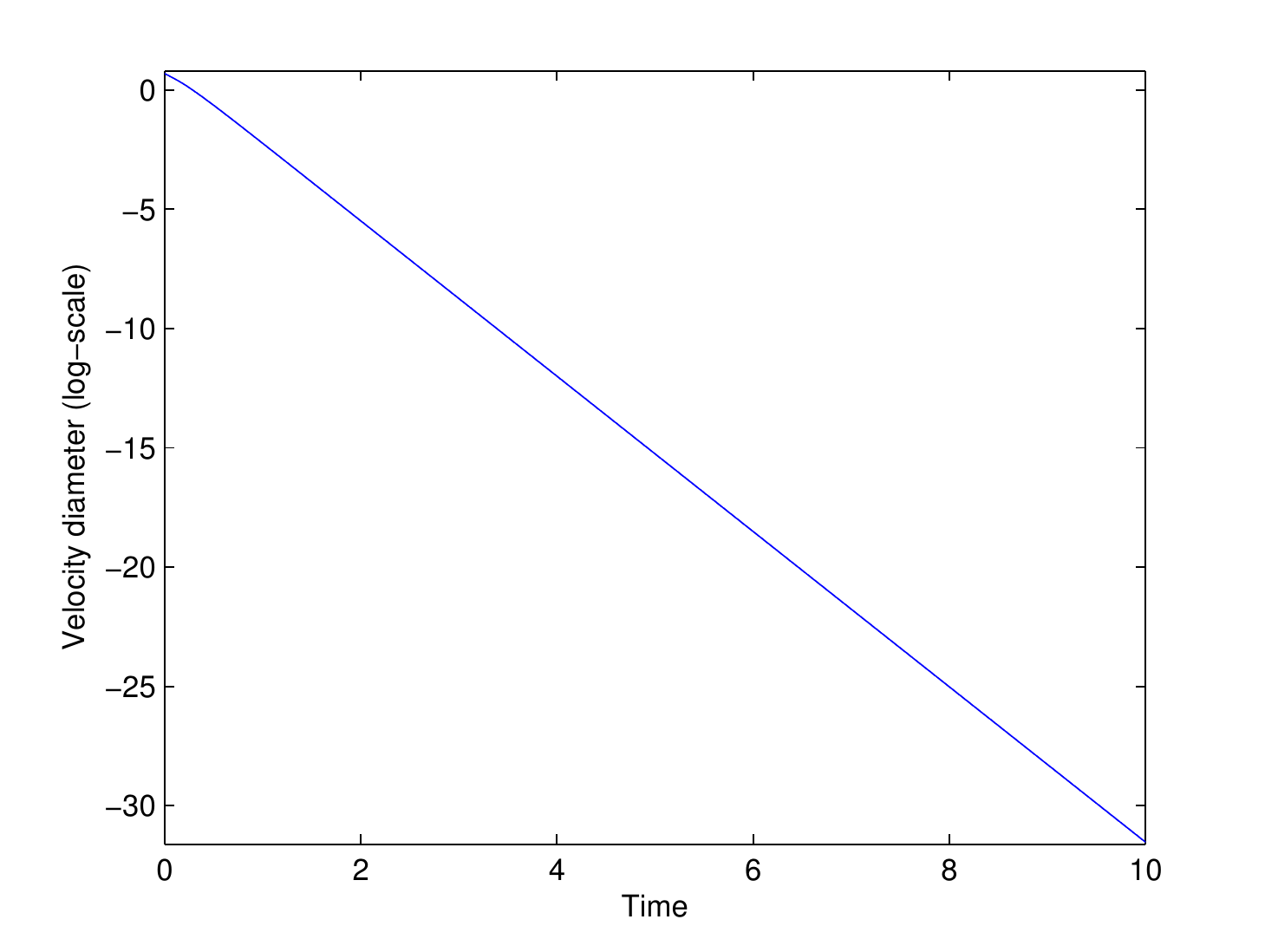}\\
 \includegraphics[width=.49\textwidth]{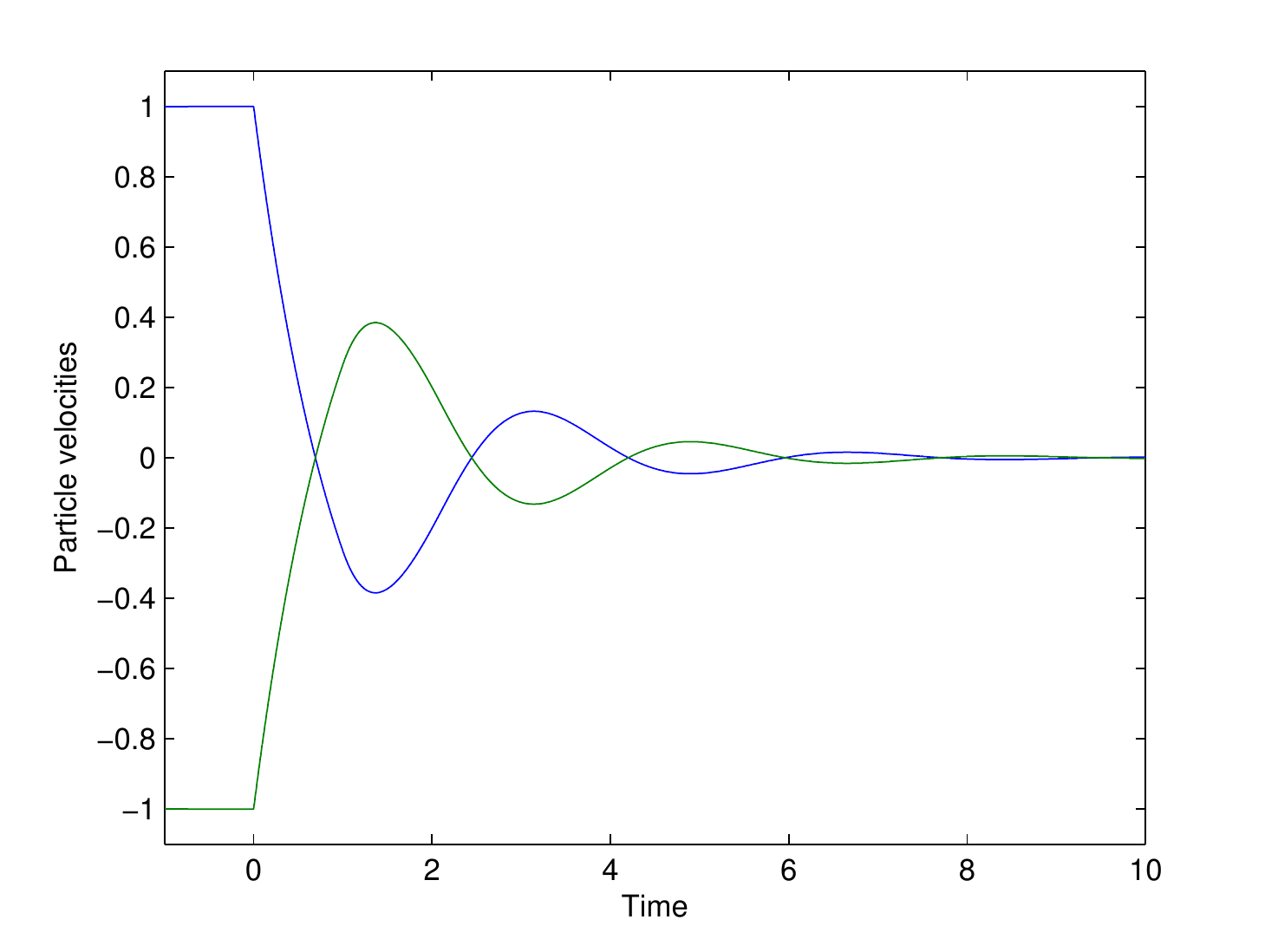}
  \includegraphics[width=.49\textwidth]{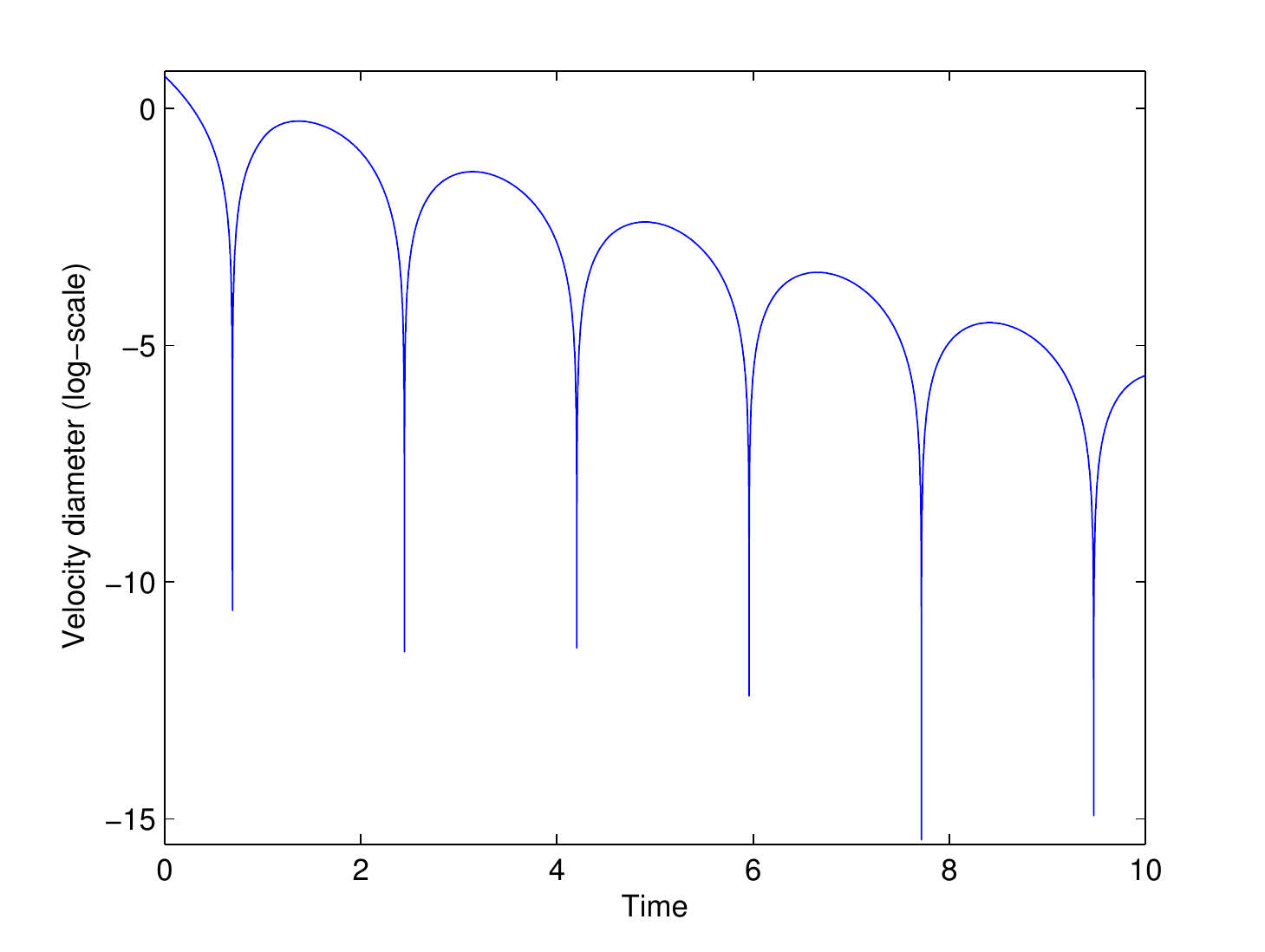}\\
  \caption{The system with two particles: Particle velocities $v_1(t)$, $v_2(t)$ as solututions of \eqref{2part}
  (left panels) and velocity diameters $d_V(t)$ (right panels, logarithmic scale)
  for $\tau=0.25$ (first row) and $\tau=1$ (second row). 
  The initial condition is constant, $v_1(t)\equiv 1$, $v_2(t)\equiv -1$ for $t\in[-\tau,0]$.}
  \label{fig:1}
\end{figure}

\subsection{Three particles}
We provide a heuristic argument suggesting that for $N=3$ and (at least) $\tau=0$,
we can expect asymptotic flocking for any initial datum
and any nonincreasing positive influence function $\psi$. 
Indeed, for any permutation $(i,j,k)$ of the particle indices $\{1,2,3\}$
we have the following alternative: Either the distance $|x_i-x_j|$
is larger than $|x_i-x_k|$, then $\psi(|x_i-x_j|) \leq \psi(|x_i-x_k|)$ and,
consequently,
\[
   \phi_{ik}(x,0) = \frac{\psi(|x_i-x_k|)}{\psi(|x_i-x_k|) + \psi(|x_i-x_j|)} \geq \frac12.
\]
Or the opposite is true, then $\psi(|x_i-x_j|) \geq \psi(|x_i-x_k|)$ and
\[
   \phi_{ij}(x,0) = \frac{\psi(|x_i-x_j|)}{\psi(|x_i-x_k|) + \psi(|x_i-x_j|)} \geq \frac12.
\]
Repeating this argument for all permutations of the indices $\{1,2,3\}$,
we conclude that each particle interacts "strongly" with at least
one other particle, independently of the particle distances
(even if the influence function $\psi(s)$ is decaying quickly for large $s$).
Thus, having only three particles, we expect asymptotic flocking for any initial datum.

Even though the above argument is heuristic and assumes $\tau=0$,
our extensive numerical simulations seem to suggest that it applies
for any delay length $\tau$. Indeed, we were not able to find a setting where
the velocity fluctuation would not be asymptotically tending to zero.
We illustrate this in Fig. \ref{fig:2}, where we solved the system \eqref{main_eq}--\eqref{IC0}
with exponentially decaying influence function $\psi(s) = e^{-s}$,
subject to the initial datum
\( \label{IC_N3_1}
   v_1(s) \equiv -10,\quad v_2(s) \equiv 0,\quad v_3(s)\equiv 20,\qquad s\in[-\tau, 0]
\)
and
\(  \label{IC_N3_2}
    x_i(s) = v_i s,\qquad s\in[-\tau, 0],
\)
i.e., the initial velocities are constant, and the particle trajectories
start from zero at $t=-\tau$.
We choose three different values for the delay $\tau\in\{10^{-2}, 10^{-1}, 1\}$.
Note that at $t=0$, the particle locations are
\[
   x_1(0)=-10\tau,\quad x_2(0) = 0,\quad x_3(0) = 20\tau,
\]
so for $\tau=1$ the terms $\psi(|x_i-x_k|)$ are exponentially small (in particular,
of the orders $e^{-10}$ and $e^{-20}$).
Still, the solution converges relatively fast to a common velocity (see Fig. \ref{fig:2}, last row).
Moreover, note that for $\tau = 0.25$ the decay
of the velocity diameter is monotone, while for $\tau=1$ oscillations appear.

\begin{figure}[ht] \centering
 \includegraphics[width=.49\textwidth]{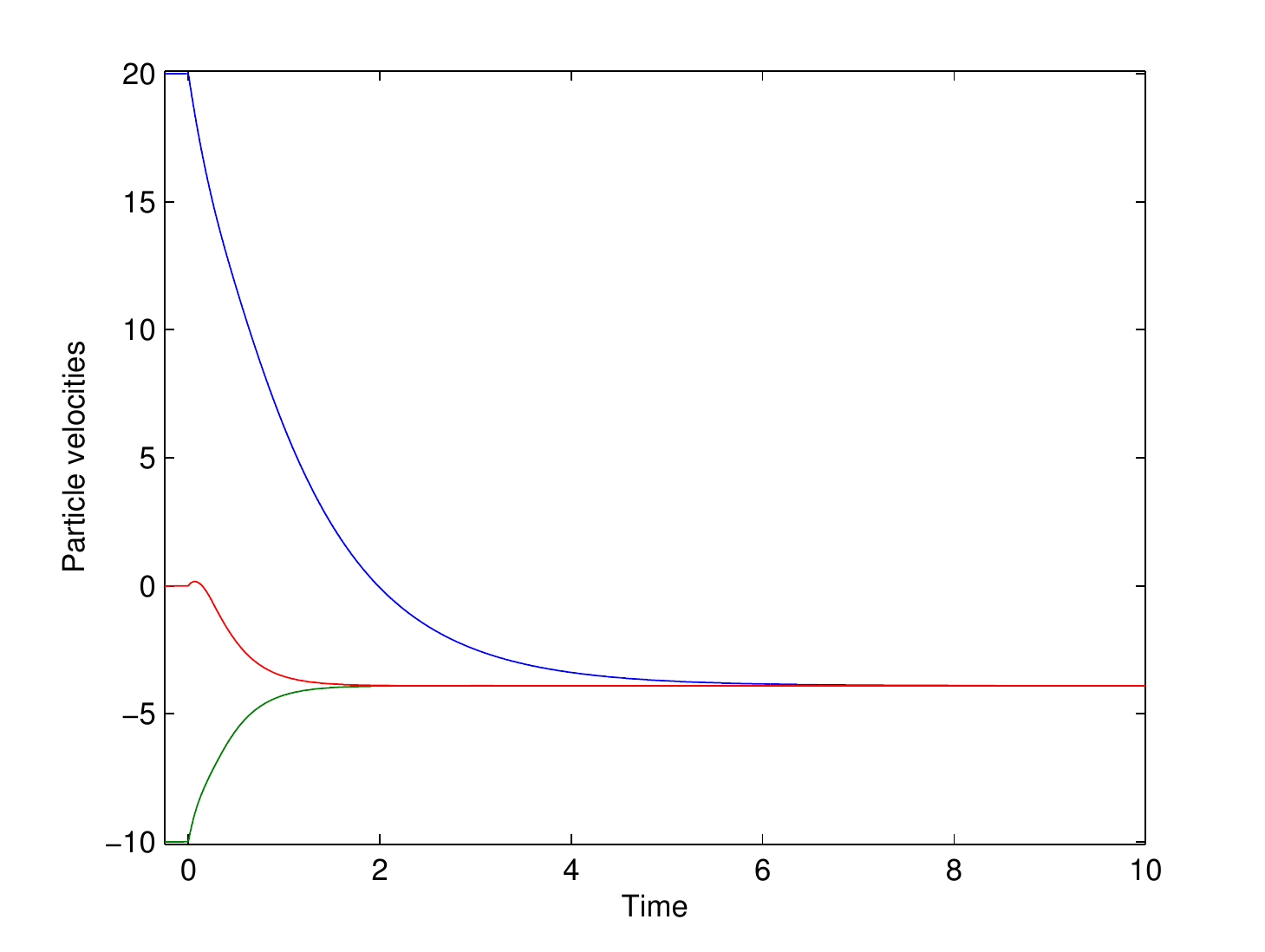}
 \includegraphics[width=.49\textwidth]{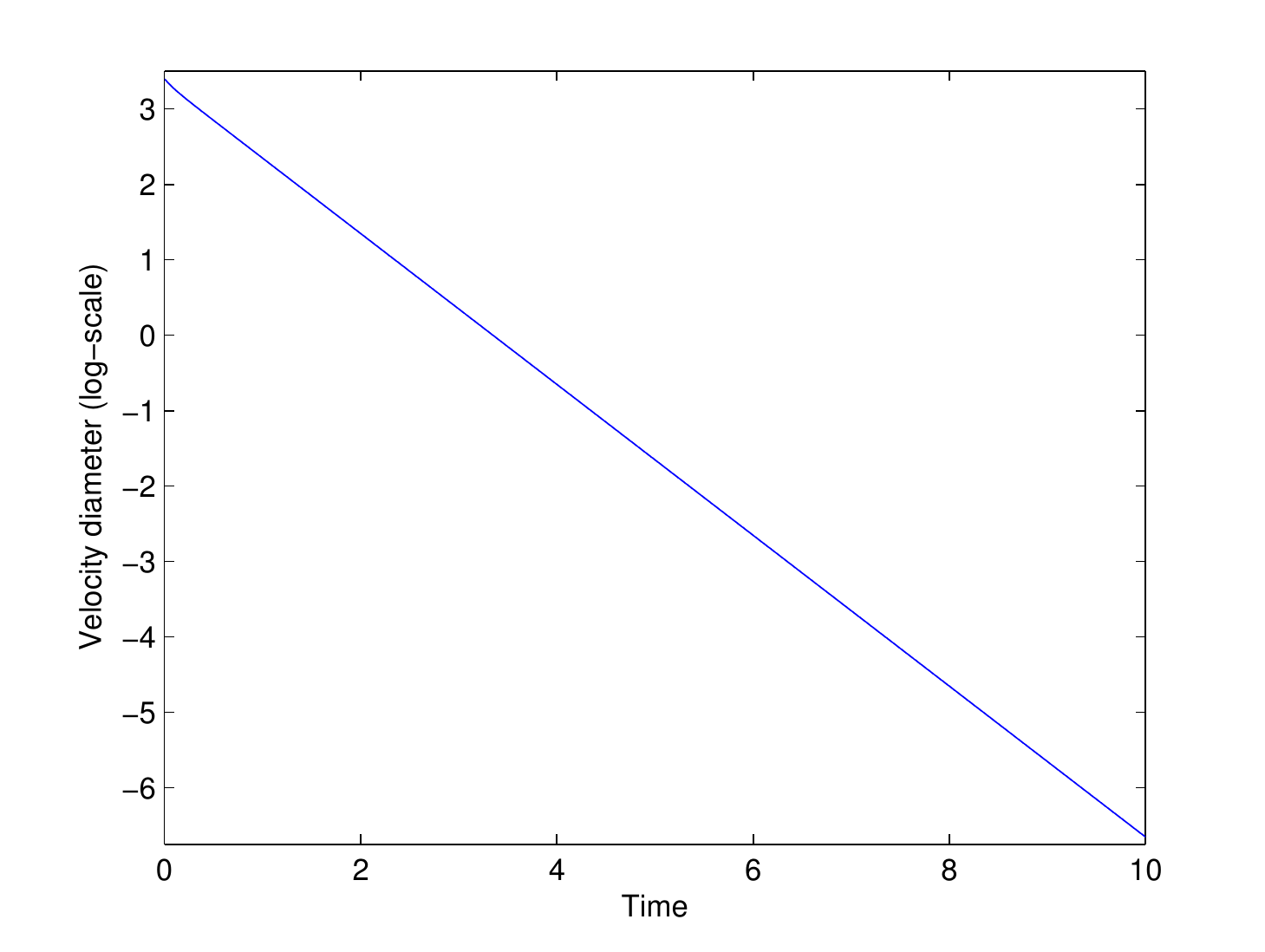}\\
 \includegraphics[width=.49\textwidth]{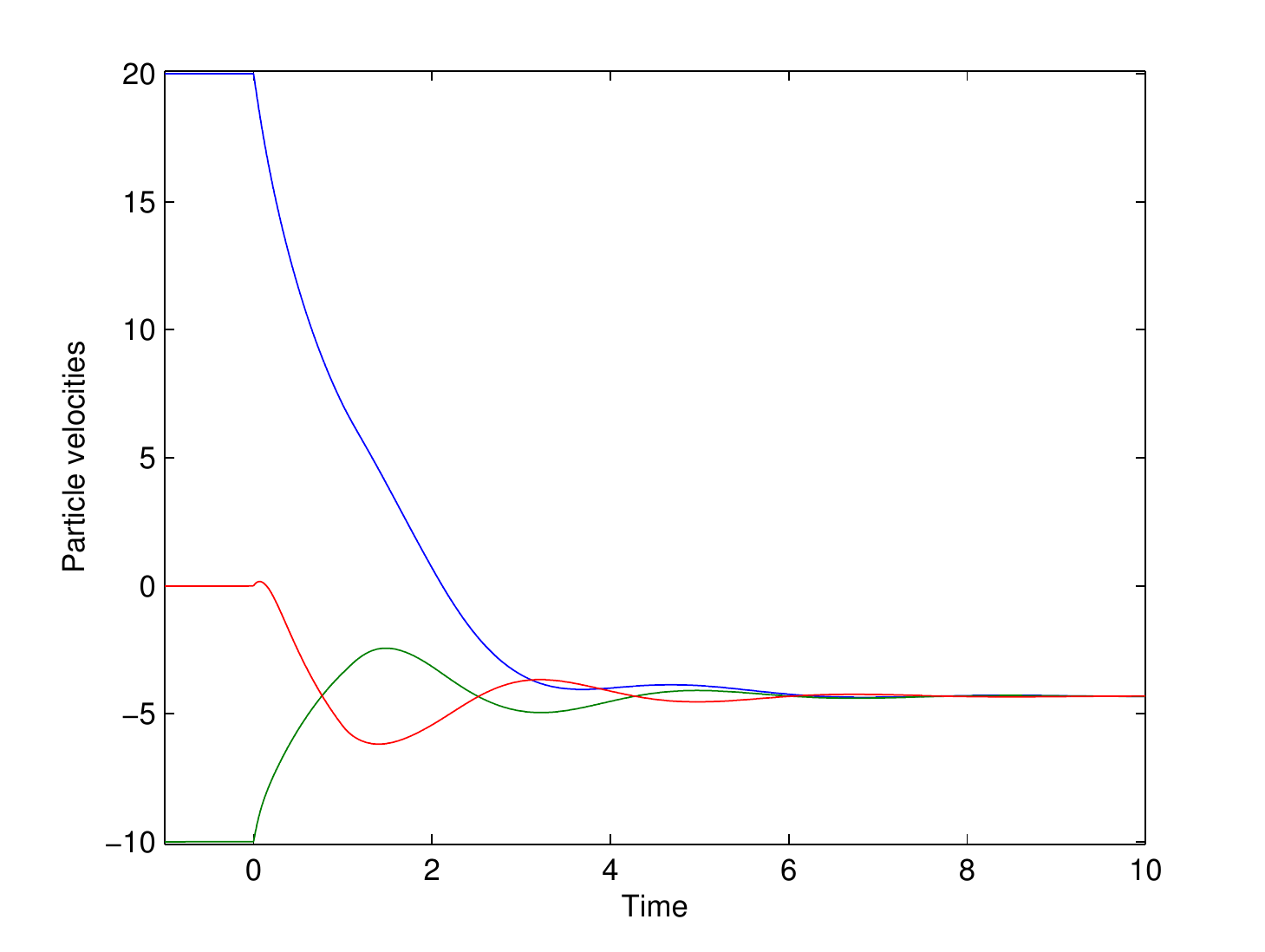}
  \includegraphics[width=.49\textwidth]{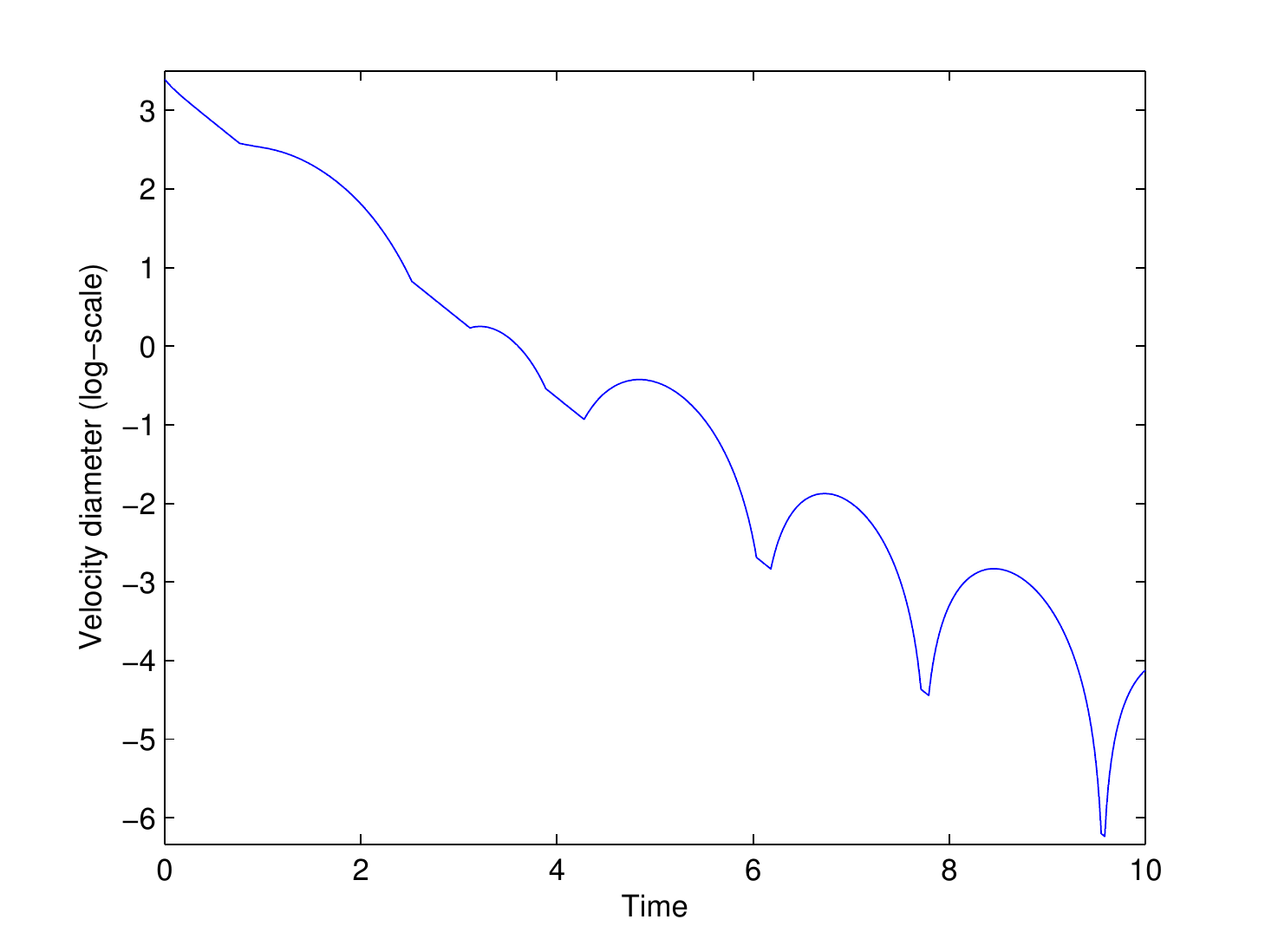}\\
  \caption{The system with three particles: particle velocities $v_1(t)$, $v_2(t), v_3(t)$ as solutions of \eqref{main_eq}--\eqref{IC0}
  (left panels) and velocity diameters $d_V(t)$ (right panels, logarithmic scale)
  for $\tau=0.25$ (first row) and $\tau=1$ (second row), 
  with exponentially decaying influence function $\psi(s) = e^{-s}$.
  The initial condition is in both cases given by \eqref{IC_N3_1}--\eqref{IC_N3_2}.
}
  \label{fig:2}
\end{figure}

\subsection{Four particles}
According to the heuristic argument above and our numerical experiments,
it seems that at least four particles are needed in order to observe non-flocking
in the system. We present a setting where flocking takes place for a small value of the delay,
but there is no flocking for a larger delay.

We solve the coupled system \eqref{main_eq}--\eqref{IC0} with $N=4$ and the influence function $\psi$
given by the Cucker-Smale-type expression
\[
   \psi(s) = \frac{1}{(1+s^2)^4}.
\]
We prescribe the initial datum
\( \label{IC_N4_1}
   v_1(s) \equiv -0.1,\quad v_2(s) \equiv 0,\quad v_3(s)\equiv 0.5,\quad v_4(s)\equiv 0.6, \qquad s\in[-\tau, 0]
\)
and
\(  \label{IC_N4_2}
    x_i(s) = v_i s,\qquad s\in[-\tau, 0].
\)
We consider two values for the delay, $\tau=0.25$ and $\tau=1$,
and plot the solutions in Fig. \ref{fig:3}.
Let us note that in neither case the flocking condition \eqref{ass1}
of Theorem \ref{thm_main} is satisfied. However, for $\tau=0.25$
the system still exhibits asymptotic flocking. On the other hand,
the reason why there is no flocking for $\tau=1$ is that there is strong interaction
between particles $1$ and $2$ and between particles $3$ and $4$,
but the interaction between those two pairs is weak.

\begin{figure}[ht] \centering
 \includegraphics[width=.49\textwidth]{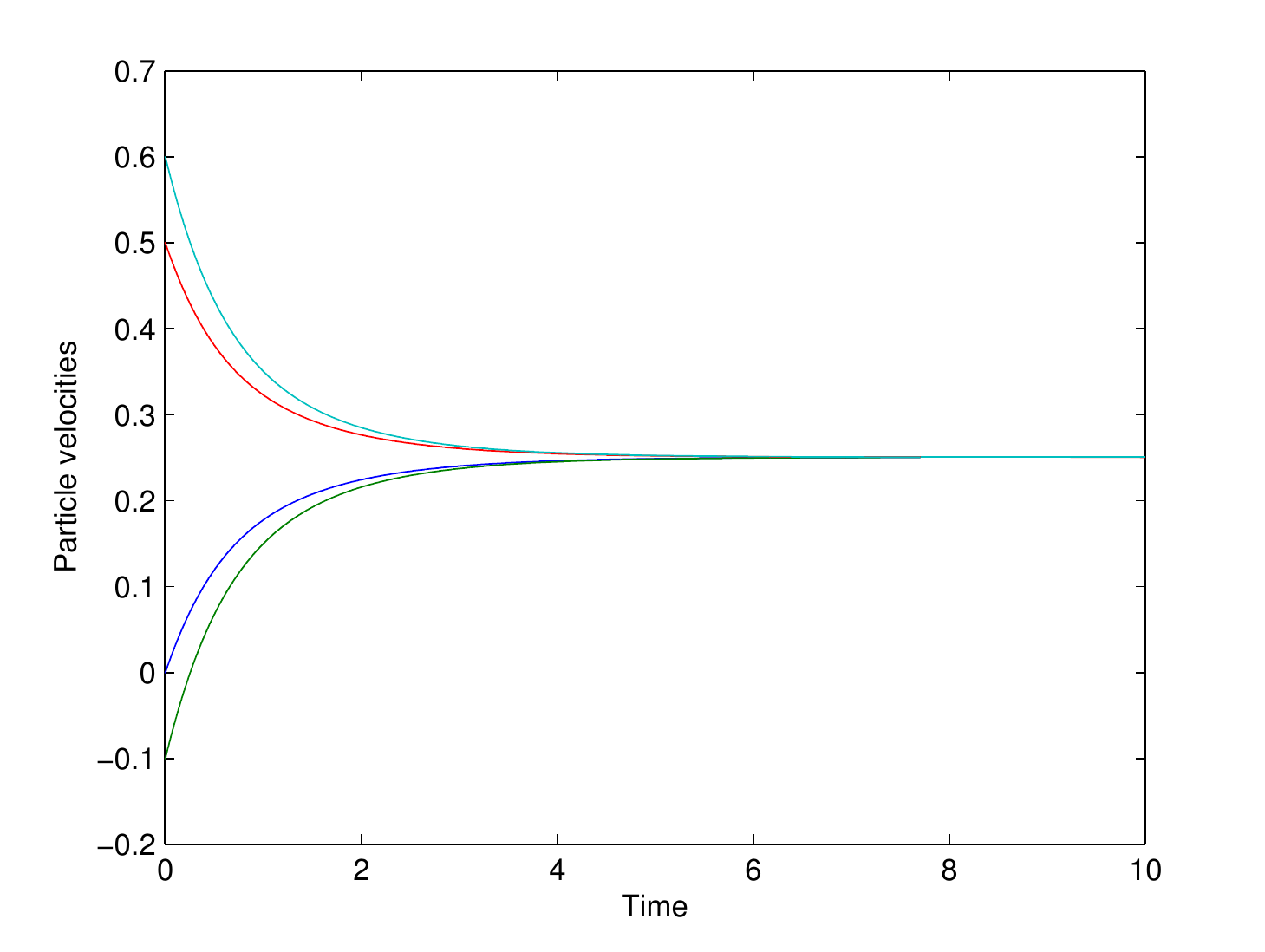}
 \includegraphics[width=.49\textwidth]{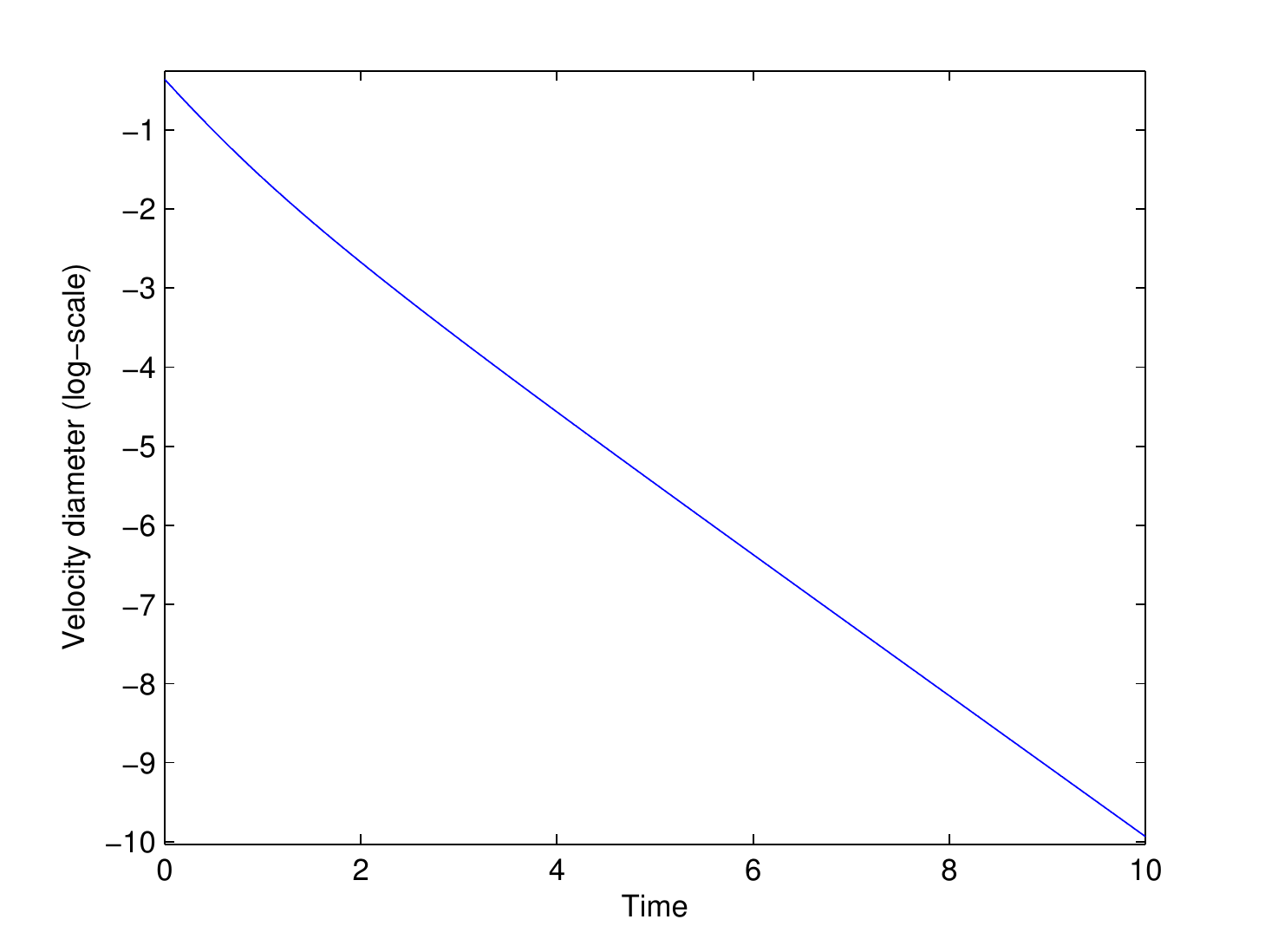}\\
 \includegraphics[width=.49\textwidth]{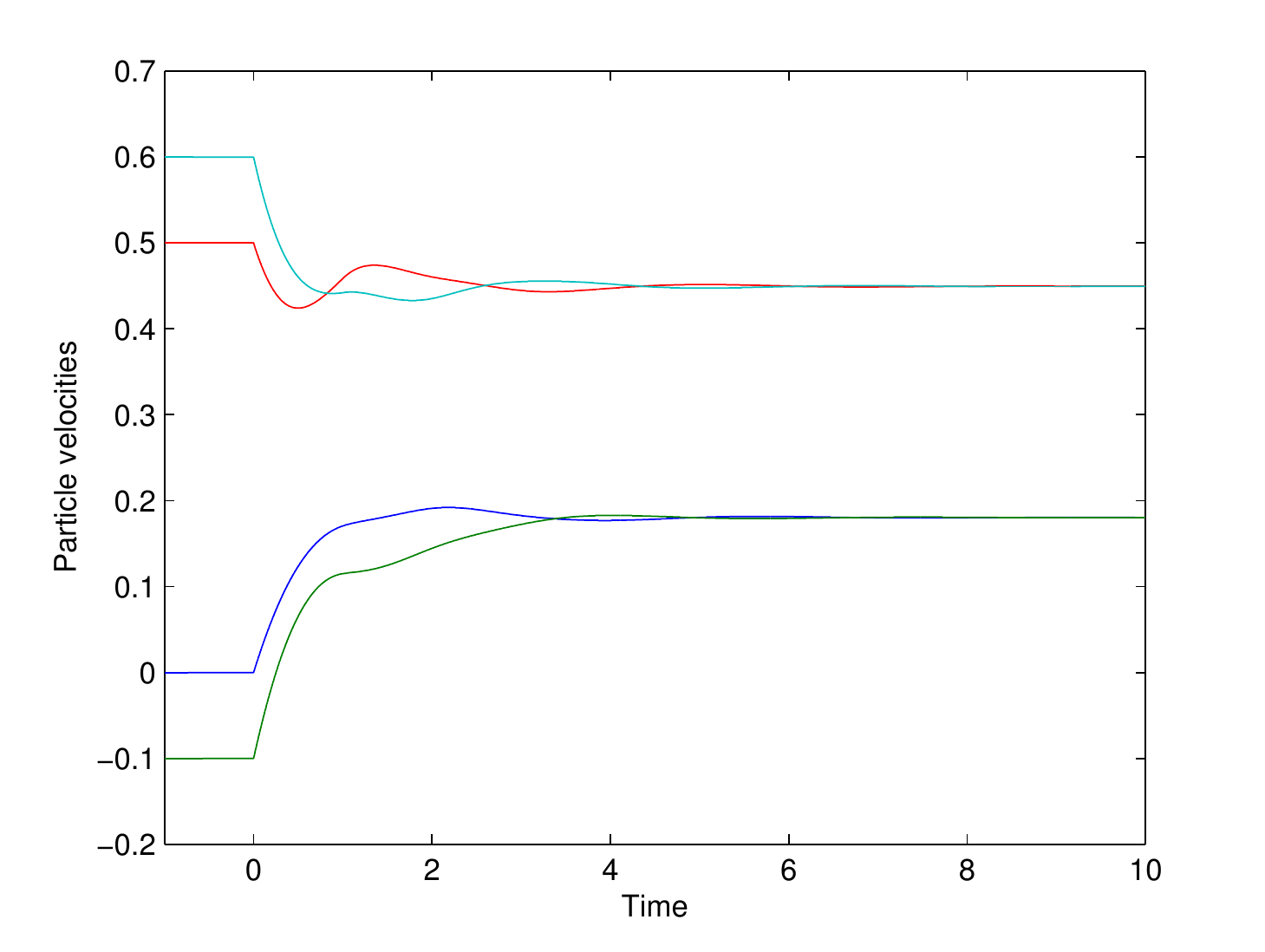}
  \includegraphics[width=.49\textwidth]{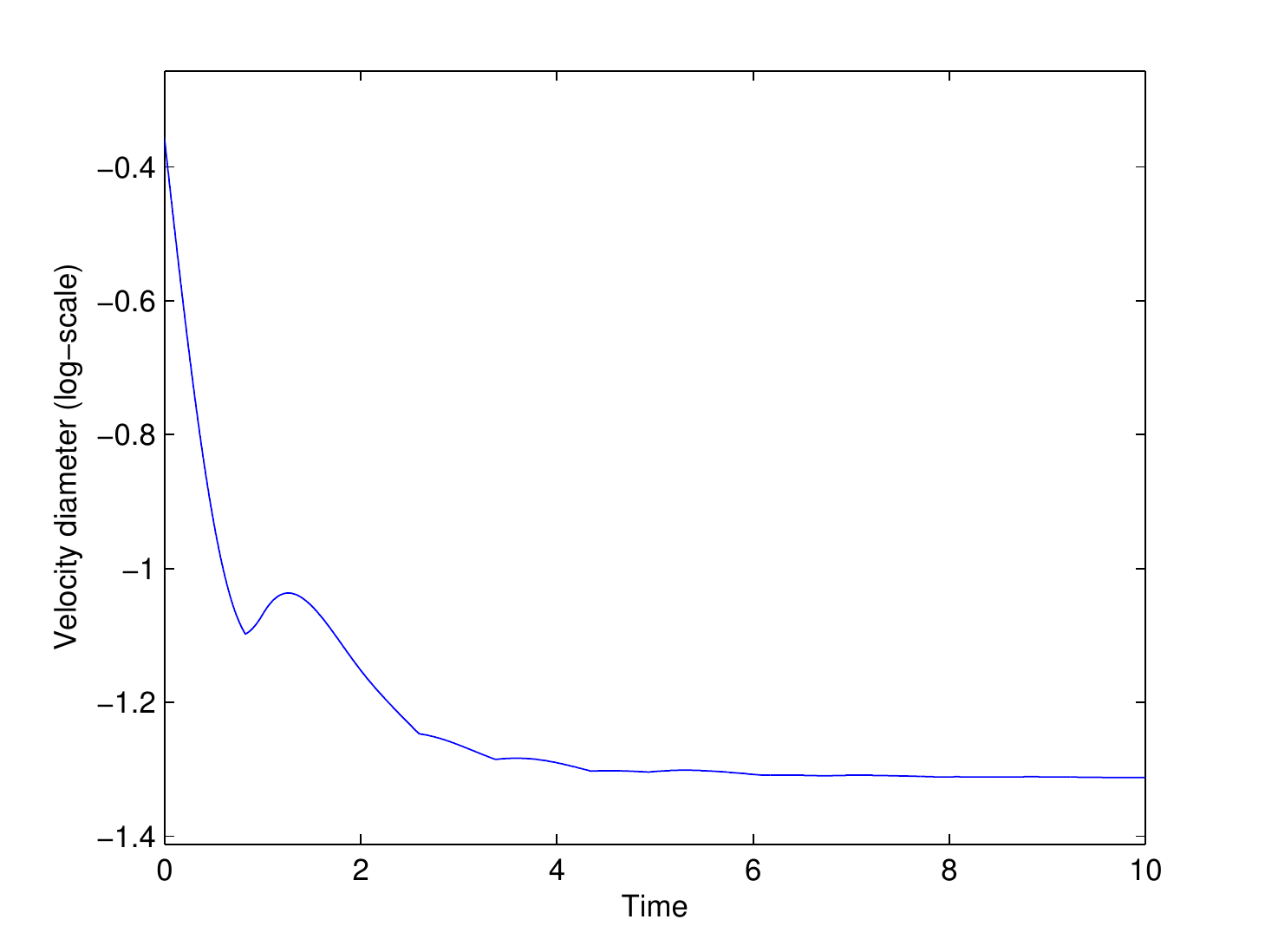}\\
  \caption{The system with four particles: particle velocities as solutions of \eqref{main_eq}--\eqref{IC0}
  (left panels) and velocity diameters $d_V(t)$ (right panels, logarithmic scale)
  for $\tau=0.25$ (first row) and $\tau=1$ (second row), 
  with the influence function $\psi(s) = {(1+s^2)^{-4}}$.
  The initial condition is in both cases given by \eqref{IC_N4_1}--\eqref{IC_N4_2}.
}
  \label{fig:3}
\end{figure}

%
%
%
%

\section*{Acknowledgments}
YPC was supported by Engineering and Physical Sciences Research Council (EP/K00804/1) and ERC-Starting grant HDSPCONTR ``High-Dimensional Sparse Optimal Control''. YPC is also supported by the Alexander Humboldt Foundation through the Humboldt Research Fellowship for Postdoctoral Researchers.
JH was supported by KAUST baseline funds and KAUST grant no. 1000000193.

%
%
%
%

\end{document}